\newcommand*{\mytitle}{Likelihood\ inference\ for\ Archimedean\ copulas} 
\newcommand*{\myauthorone}{Marius\ Hofert}
\newcommand*{\mycontactone}{RiskLab,\ Department\ of\ Mathematics,\ ETH\ Zurich,\ 8092\ Zurich,\ Switzerland,\ \href{mailto:marius.hofert@math.ethz.ch}{\nolinkurl{marius.hofert@math.ethz.ch}}}
\newcommand*{\mycomment}{The\ author\ (Willis\ Research\ Fellow)\ thanks\ Willis\ Re\ for\ financial\ support\ while\ this\ work\ was\ being\ completed.}
\newcommand*{\myauthortwo}{Martin\ M\"achler}
\newcommand*{\mycontacttwo}{Seminar\ f\"ur\ Statistik,\ ETH\ Zurich,\ 8092\ Zurich,\ Switzerland,\ \href{mailto:maechler@stat.math.ethz.ch}{\nolinkurl{maechler@stat.math.ethz.ch}}}
\newcommand*{\myauthorthree}{Alexander\ J.\ McNeil}
\newcommand*{\mycontactthree}{Department\ of\ Actuarial\ Mathematics\ and\ Statistics,\ Heriot-Watt\ University,\ Edinburgh,\ EH14 4AS,\ Scotland,\ \href{mailto:A.J.McNeil@hw.ac.uk}{\nolinkurl{A.J.McNeil@hw.ac.uk}}}
\newcommand*{\mysubject}{Article}
\newcommand\myisodate{\number\year-\ifcase\month\or 01\or 02\or 03\or 04\or 05\or 06\or 07\or 08\or 09\or 10\or 11\or 12\fi-\ifcase\day\or 01\or 02\or 03\or 04\or 05\or 06\or 07\or 08\or 09\or 10\or 11\or 12\or 13\or 14\or 15\or 16\or 17\or 18\or 19\or 20\or 21\or 22\or 23\or 24\or 25\or 26\or 27\or 28\or 29\or 30\or 31\fi}%
\newcolumntype{d}[2]{D{.}{.}{#1.#2}}%
\newcommand*{\abstractnoindent}{}%
\let\abstractnoindent\abstract
\renewcommand*{\abstract}{\let\quotation\quote\let\endquotation\endquote
\abstractnoindent}
\newcommand*{\textcite}[2][]{\citet[#1]{#2}}
\newtheoremstyle{mythmstyle}%
	{0.5em}%
	{0.5em}%
	{}%
	{}%
	{\sffamily\bfseries}%
	{}%
	{\newline}%
	{\thmname{#1}\ \thmnumber{#2}\ \thmnote{(#3)}}%
\newcommand*{\myskip}{~\vspace{-1.2em}}%
\newcommand*{\myskipalgo}{~\vspace{-2.2em}}%
\theoremstyle{mythmstyle}
\newtheorem{definition}{Definition}[section]%
\newtheorem{proposition}[definition]{Proposition}
\newtheorem{theorem}[definition]{Theorem}
\newtheorem{corollary}[definition]{Corollary}
\newtheorem{remark}[definition]{Remark}
\newtheorem{example}[definition]{Example}
\newtheorem{algorithm}[definition]{Algorithm}
\renewcommand*\proofname{Proof}
\renewenvironment{proof}[1][\proofname]{\par
  \pushQED{\qed}%
  \normalfont\topsep2\p@\@plus2\p@\relax
  \trivlist
  \item[\hskip\labelsep
	  \sffamily\bfseries #1]\mbox{}\hfill\\*\ignorespaces
}{%
  \popQED\endtrivlist\@endpefalse
}
\newlength{\mytopsep}
\newlength{\myitemsep}
\newcommand*{\eps}{\varepsilon}
\newcommand{\T}{\ensuremath{^\mathsf{T}}\hspace{-0.5mm}}
\newcommand*{\omu}[3]{\underset{#3}{\overset{#1}{#2}}}
\renewcommand*{\vec}[2]{{\biggl(\begin{matrix} #1 \\ #2 \end{matrix}\biggr)}}
\newcommand*{\I}{\mathbbm{1}}
\newcommand*{\IN}{\mathbb{N}}
\newcommand*{\IR}{\mathbb{R}}
\newcommand*{\IE}{\mathbb{E}}
\renewcommand*{\S}{\operatorname*{S}}
\newcommand*{\argsup}{\operatorname*{argsup}}
\newcommand*{\Li}{\operatorname*{Li}}
\renewcommand*{\th}{\bm{\theta}}
\newcommand*{\psii}{{\psi^{-1}}}
\newcommand*{\psis}[2]{{\psi_{#1}^{#2}}}
\newcommand*{\psiis}[1]{{\psi_{#1}^{-1}}}
\newcommand*{\LS}{\mathcal{LS}}
\newcommand*{\LSi}{\LS^{-1}}
\newcommand*{\Geo}{\operatorname*{Geo}}
\newcommand*{\Sibuya}{\operatorname*{Sibuya}}
\newcommand*{\GIG}{\operatorname*{GIG}}
\newcommand*{\Log}{\operatorname*{Log}}
\newcommand*{\var}[3]{{{#1}^{\text{#2}}_{#3}}}
\begin{document}
\AddToShipoutPicture{%
  \begin{tikzpicture}[remember picture, overlay]
		\node[scale=10,rotate=54.74,text opacity=0.2] at (current page.center){\normalfont\sffamily Submitted};
  \end{tikzpicture}%
}
\setvruler[10pt][1][1][4][1][0pt][0pt][-28pt][\textheight]%
\thispagestyle{plain}
	\begin{center}
		\sffamily
		{\bfseries\LARGE\mytitle\par}
		\bigskip
		{\Large\myauthorone\footnote{\mycontactone. \mycomment},\ \myauthortwo\footnote{\mycontacttwo},\ \myauthorthree\footnote{\mycontactthree}\par
	    \bigskip
	    \myisodate\par}
	\end{center}
	\par\bigskip
	\begin{abstract}
		Explicit functional forms for the generator derivatives of well-known one-parameter Archimedean copulas are derived. These derivatives are essential for likelihood inference as they appear in the copula density, conditional distribution functions, or the Kendall distribution function. They are also required for several asymmetric extensions of Archimedean copulas such as Khoudraji-transformed Archimedean copulas. Access to the generator derivatives makes maximum-likelihood estimation for Archimedean copulas feasible in terms of both precision and run time, even in large dimensions. It is shown by simulation that the root mean squared error is decreasing in the dimension. This decrease is of the same order as the decrease in sample size. Furthermore, confidence intervals for the parameter vector are derived. Moreover, extensions to multi-parameter Archimedean families are given. All presented methods are implemented in the open-source \textsf{R} package \texttt{nacopula} and can thus easily be accessed and studied. 		
	\end{abstract}
	\minisec{Keywords}
		Archimedean copulas, maximum-likelihood estimation, confidence intervals, multi-parameter families.
	\minisec{MSC2010} 
	62H12, 62F10, 62H99, 65C60.%
\section{Introduction}\label{sec.intro}
   The well-known class of \textit{Archimedean copulas} consists of copulas of the form 
   \begin{align*}
		C(\bm{u})=\psi(\psi(u_1)+\dots+\psi(u_d)),\ \bm{u}\in[0,1]^d,
	\end{align*}
	with \textit{generator} $\psi$. In practical applications, $\psi$ belongs to a parametric family $(\psis{\th}{})_{\th\in\Theta}$ whose parameter vector $\th$ needs to be estimated. 
	\par
	There are several known approaches for estimating parametric Archimedean copula families; see \textcite{hofertmaechlermcneil2011a} for an overview and a comparison of some estimators. 
	\par
	In the work at hand, we consider a (semi-)parametric estimation approach based on the likelihood. There are two significant obstacles to overcome. The first one is to derive tractable algebraic expressions for the generator derivatives and thus the copula density. The second is to evaluate these expressions efficiently in terms of both precision and run time. 
	\par
	Although the density of an Archimedean copula has an explicit form in theory, accessing the required derivatives is known to be challenging, especially in large dimensions. For example, \textcite{bergaas2009} mention that for Archimedean copulas it is not straightforward to derive the density in general for all parametric families. For the Gumbel family, they say that one has to resort to a computer algebra system, such as Mathematica or the function \texttt{D} in \textsf{R}, to derive the $d$-dimensional density. Note that computations based on computer algebra systems often fail already in low dimensions. Even if a theoretical formula can be computed, the numerical evaluation of such (typically lengthy) formulas is prone to errors since they are not given in a numerically tractable form. This often requires to work with a large number of significant digits which is typically far too slow to be applied in large-scale simulation studies (for example, to access the quality of goodness-of-fit testing procedures). Furthermore, as we will point out below, results obtained by computer algebra systems can be unreliable. 
	\par
	Generator derivatives for some important Archimedean families can be found in \textcite{shi1995}, \textcite{barbegenestghoudiremillard1996}, and \textcite{wuvaldezsherris2007}, however, in recursive form. In this work, we derive explicit formulas for the generator derivatives of well-known Archimedean families in any dimension. These derivatives are interesting in their own right, for example, for accessing densities, for building conditional distribution functions, or for evaluating the Kendall distribution function. They can also be used to explicitly compute densities of asymmetric extensions of Archimedean copulas such as Khoudraji-transformed Archimedean copulas. 
	\par
	We then tackle the problem of maximum-likelihood estimation for Archimedean copulas for these families. Focus is put on large, say ten to one hundred, dimensions since they are the most relevant in practice; see \textcite{embrechtshofert2011c}. Note that the considered Gumbel family is also an extreme value copula, for which densities in general are rarely known. \textcite{hofertmaechlermcneil2011a} show the excellent performance of the maximum-likelihood estimator as measured by both precision and run time in a large-scale comparison with various other estimators up to dimension one hundred. Furthermore, to add transparency, all the algorithms used in this paper are implemented in the open source \textsf{R} package \texttt{nacopula}, so that the interested reader can study the non-trivial details of the numerical implementation and the numerous tests conducted in more detail. In the work at hand, we also consider examples of multi-parameter Archimedean families. In contrast to method-of-moments-like estimation procedures such as the one based on Kendall's tau, maximum-likelihood estimation is not limited to the one-parameter case. Furthermore, we address the problem of computing initial intervals for the optimization of the log-likelihood for the multi-parameter Archimedean families considered. Additionally, we show how confidence intervals for the copula parameter vector can be constructed. 
	\par
	The paper is organized as follows. In Section \ref{sec.ac}, we briefly recall the notion of Archimedean copulas and the families considered. Section \ref{sec.est} presents explicit functional forms of the generator derivatives of these families and the corresponding copula densities are derived. In Section \ref{sec.nvsd}, the root mean squared error is investigated as a function of the dimension. Section \ref{sec.ci} presents methods for constructing confidence intervals for the copula parameter vector. In Section \ref{sec.multi.param} we address extensions to multi-parameter Archimedean families, including a strategy for computing initial intervals and two examples of two-parameter families. Finally, Section \ref{sec.con} concludes.
\section{Archimedean copulas}\label{sec.ac}
	\begin{definition}
		An \textit{(Archimedean) generator} is a continuous, decreasing function $\psi:[0,\infty]\to[0,1]$ which satisfies $\psi(0)=1$, $\psi(\infty)=\lim_{t\to\infty}\psi(t)=0$, and which is strictly decreasing on $[0,\inf\{t:\psi(t)=0\}]$. A $d$-dimensional copula $C$ is called \textit{Archimedean} if it permits the representation 
		\begin{align}
			C(\bm{u})=\psi(\psii(u_1)+\dots+\psii(u_d)),\ \bm{u}\in[0,1]^d,\label{C}
		\end{align}
		for some generator $\psi$ with inverse $\psii:[0,1]\to[0,\infty]$, where $\psii(0)=\inf\{t:\psi(t)=0\}$.
	\end{definition}
	\par
	\textcite{mcneilneslehova2009} show that a generator defines an Archimedean copula if and only if $\psi$ is $d$\textit{-monotone}, meaning that $\psi$ is continuous on $[0,\infty]$, admits derivatives up to the order $d-2$ satisfying $\smash[t]{(-1)^k\frac{d^k}{dt^k}\psi(t)\ge 0}$ for all $k\in\{0,\dots,d-2\}$, $t\in(0,\infty)$, and $\smash[t]{(-1)^{d-2}\frac{d^{d-2}}{dt^{d-2}}\psi(t)}$ is decreasing and convex on $(0,\infty)$.
	\par
	According to \textcite{mcneilneslehova2009}, an Archimedean copula $C$ admits a density $c$ if and only if $\psis{}{(d-1)}$ exists and is absolutely continuous on $(0,\infty)$. In this case, $c$ is given by
	\begin{align}
		c(\bm{u})=\psis{}{(d)}(t(\bm{u}))\prod_{j=1}^d(\psii)^\prime(u_j),\ \bm{u}\in(0,1)^d,\label{c}
	\end{align}
	where $t(\bm{u})=\sum_{j=1}^d\psi(u_j)$.
	\par
	We mainly assume $\psi$ to be \textit{completely monotone}, meaning that $\psi$ is continuous on $[0,\infty]$ and $\smash[t]{(-1)^k\frac{d^k}{dt^k}\psi(t)\ge 0}$ for all $k\in\IN_{0}$, $t\in(0,\infty)$, so that $\psi$ is the Laplace-Stieltjes transform of a distribution function $F$ on the positive real line, that is, $\psi=\LS[F]$; see Bernstein's Theorem in \textcite[p.\ 439]{feller1971}. The class of all such generators is denoted by $\Psi_\infty$ and it is clear that a $\psi\in\Psi_\infty$ generates an Archimedean copula in any dimensions $d$ and that its density exists.
	\par
	There are several well-known parametric generator families; see \textcite[pp.\ 116]{nelsen2007}, also referred to as \textit{Archimedean families}. Among the most widely used in applications are those of Ali-Mikhail-Haq (``A''), Clayton (``C''), Frank (``F''), Gumbel (``G''), and Joe (``J''); see Table~\ref{tab.gen}. We consider these families as working examples throughout this work. Detailed information about the corresponding distribution functions $F$ is given in \textcite{hofert2011b} and references therein. Note that these one-parameter families can be extended to allow for more parameters, for example, via outer power transformations. Furthermore, there are Archimedean families which are naturally given by more than a single parameter. Examples for both cases are given in Section \ref{sec.multi.param}.
	\begin{table}[htbp] 
		\centering
		\begin{tabularx}{\textwidth}{@{\extracolsep{\fill}}c@{\extracolsep{0mm}}c@{\extracolsep{1mm}}c@{\extracolsep{-3mm}}c}
			\toprule
			\multicolumn{1}{c}{Family}&\multicolumn{1}{c}{Parameter}&\multicolumn{1}{c}{$\psi(t)$}&\multicolumn{1}{c}{$V\sim F=\LSi[\psi]$}\\
			\midrule
				A&$\theta\in[0,1)$&$(1-\theta)/(\exp(t)-\theta)$&$\Geo(1-\theta)$\\
				C&$\theta\in(0,\infty)$&$(1+t)^{-{1/\theta}}$&$\Gamma(1/\theta,1)$\\
				F&$\theta\in(0,\infty)$&$-\log\bigl(1-(1-e^{-\theta})\exp(-t)\bigr)/\theta$&$\Log(1-e^{-\theta})$\\
				G&$\theta\in[1,\infty)$&$\exp(-t^{1/\theta})$&$\S(1/\theta,1,\cos^\theta(\pi/(2\theta)),\I_{\{\theta=1\}};1)$\\
				J&$\theta\in[1,\infty)$&$1-(1-\exp(-t))^{1/\theta}$&$\Sibuya(1/\theta)$\\
			\bottomrule 
		\end{tabularx}
		\setcapwidth{\textwidth}%
		\caption{Well-known one-parameter Archimedean generators $\psi$ with corresponding distributions $F=\LSi[\psi]$.}
		\label{tab.gen}
	\end{table}
	\par
	Table~\ref{tab.prop} summarizes properties concerning Kendall's tau and the tail-dependence coefficients; see \textcite[p.\ 91]{joe1997}, \textcite{joehu1996}, and \textcite[p.\ 214]{nelsen2007} for the investigated Archimedean families. Here, $D_1(\theta)=\int_0^\theta t/(\exp(t)-1)\,dt/\theta$ denotes the \textit{Debye function of order one}. Note that these properties are often of interest in order to choose a suitable model which is then estimated. The construction of initial intervals in Section \ref{sec.ii} for the optimization of the likelihood is based on Kendall's tau.	
	\begin{table}[htbp]
	  \centering
	  \begin{tabularx}{\textwidth}{@{\extracolsep{\fill}}cccc}
	    \toprule
	    \multicolumn{1}{c}{Family}&\multicolumn{1}{c}{$\tau$}&\multicolumn{1}{c}{$\lambda_L$}&\multicolumn{1}{c}{$\lambda_U$}\\
	    \midrule
			A&$1-2(\theta+(1-\theta)^2\log(1-\theta))/(3\theta^2)$&0&0\\
			C&$\theta/(\theta+2)$&$2^{-1/\theta}$&0\\
			F&$1+4(D_1(\theta)-1)/\theta$&0&0\\
			G&$(\theta-1)/\theta$&0&$2-2^{1/\theta}$\\
			J&$1-4\sum_{k=1}^\infty 1/(k(\theta k+2)(\theta(k-1)+2))$&0&$2-2^{1/\theta}$\\
		 \bottomrule
	  \end{tabularx}
	  \caption{Kendall's tau and tail-dependence coefficients.}
	  \label{tab.prop}
	\end{table}		
\section{Maximum-likelihood estimation for Archimedean copulas}\label{sec.est}
\subsection{The pseudo maximum-likelihood estimator}
   Assume that we have given realizations $\bm{x}_i$, $i\in\{1,\dots,n\}$, of independent and identically distributed (``i.i.d.'') random vectors $\bm{X}_i$, $i\in\{1,\dots,n\}$, from a joint distribution function $H$ with Archimedean copula $C$ generated by $\psi$ and corresponding density $c$. The generator $\psi$ is assumed to belong to a parametric family $(\psis{\th}{})_{\th\in\Theta}$ with parameter vector $\th\in\Theta\subseteq\IR^p$, $p\in\IN$, and the true but unknown vector is $\th_0$ (similarly, $C=C_{\th_0}$ and $c=c_{\th_0}$). As usual, random vectors or random variables are denoted by upper-case letters, their realizations by lower-case letters.
	\par
	Before estimating $\th_0$, the first step is usually to estimate the marginal distribution functions. In a second step, one then estimates $\th_0$. This two-step approach is typically much easier to accomplish than estimating the parameters of the marginal distribution functions and the copula parameter vector simultaneously. Estimating the marginal distribution functions can be done either parametrically or non-parametrically. Based on maximum-likelihood estimation, the former approach is suggested by \textcite{joexu1996} and is known as \textit{inference functions for margins}. The latter approach is known as \textit{pseudo maximum-likelihood estimation} and is suggested by \textcite{genestghoudirivest1995}; see \textcite{kimsilvapullesilvapulle2007} for a comparison of maximum-likelihood estimation, the method of inference functions for margins, and pseudo maximum-likelihood estimation. 
	\par
	Following pseudo maximum-likelihood estimation, the marginal distribution functions are estimated by their empirical distribution functions $\hat{F}_{nj}(x)=\frac{1}{n}\sum_{k=1}^n\I_{\{x_{kj}\le x\}}$, $j\in\{1,\dots,n\}$, leading to the so-called \textit{pseudo-observations} $\hat{\bm{u}}_i=(\hat{u}_{i1},\dots,\hat{u}_{id})\T$, $i\in\{1,\dots,n\}$, where
	\begin{align}
		\hat{u}_{ij}=\frac{n}{n+1}\hat{F}_{nj}(x_{ij})=\frac{r_{ij}}{n+1},\ i\in\{1,\dots,n\},\ j\in\{1,\dots,d\}.\label{pseudo.obs}
	\end{align}
	Here, for each $j\in\{1,\dots,d\}$, $r_{ij}$ denotes the rank of $x_{ij}$ among all $x_{kj}$, $k\in\{1,\dots,n\}$. The asymptotically negligible scaling factor of $n/(n+1)$ is used to force the variates to fall inside the open unit hypercube to avoid problems with density evaluation at the boundaries of $[0,1]^d$. As usual, the pseudo-observations are interpreted as realizations of a random sample from $C$ (despite known issues of this interpretation such as the fact that the pseudo-observations are neither realizations of perfectly independent random vectors nor that the components are perfectly following a univariate standard uniform distribution) based on which the copula parameter vector $\th_0$ is estimated.
\subsection{Likelihood theory}\label{sec.lik}
   Maximum-likelihood estimation is based on the following theory. Given realizations $\bm{u}_i$, $i\in\{1,\dots,n\}$, of a random sample $\bm{U}_i$, $i\in\{1,\dots,n\}$, from the copula $C$ (in practice, $\bm{u}_i$ is taken as $\hat{\bm{u}}_i$, $i\in\{1,\dots,n\}$, in (\ref{pseudo.obs})), the \textit{likelihood} and \textit{log-likelihood} are defined by
   \begin{align*}
		L(\th;\bm{u}_1,\dots,\bm{u}_n)=\prod_{i=1}^nc_{\th}(\bm{u}_i)\quad\text{and}\quad l(\th;\bm{u}_1,\dots,\bm{u}_n)&=\sum_{i=1}^nl(\th;\bm{u}_i),
	\end{align*}
	respectively, where
	\begin{align*}
		l(\th;\bm{u}_i)=\log c_{\th}(\bm{u}_i)=\log\bigl((-1)^d\psis{\th}{(d)}(t_{\th}(\bm{u}))\bigr)+\sum_{j=1}^d\log(-(\psiis{\th})^\prime(u_{ij})).
	\end{align*}
	Here, the subscript $\th$ of $t(\bm{u})$ is used to stress the dependence of $t(\bm{u})$ on $\th$. The \textit{maximum-likelihood estimator} $\hat{\th}_n=\hat{\th}_n(\bm{u}_1,\dots,\bm{u}_n)$ can thus be found by solving the optimization problem
	\begin{align*}
		\hat{\th}_n=\argsup_{\th\in\Theta}l(\th;\bm{u}_1,\dots,\bm{u}_n).
	\end{align*}
	This optimization is typically done numerically. 
	\par
	Assuming the derivatives to exist, the \textit{score function} is defined as
	\begin{align*}
		s_{\th}(\bm{u})=\nabla l(\th;\bm{u})=\biggl(\frac{\partial}{\partial\theta_1}l(\th;\bm{u}),\dots,\frac{\partial}{\partial\theta_p}l(\th;\bm{u})\biggr)\T
	\end{align*}
	and the \textit{Fisher information} is 
	\begin{align*}%
		I(\th)=\IE_{\th}\bigl[s_{\th}(\bm{U})s_{\th}(\bm{U})\T\,\bigr]=\IE_{\th}\biggl[\biggl(\frac{\partial}{\partial\theta_i}l(\th;\bm{u})\frac{\partial}{\partial\theta_j}l(\th;\bm{u})\biggr)_{i,j\in\{1,\dots,p\}}\biggr]
	\end{align*}
	for $\bm{U}\sim C$. 
	\par
	Under regularity conditions (see \textcite[p.\ 281]{coxhinkley1974}, \textcite[pp.\ 384]{rohatgi1976}, \textcite[pp.\ 144]{serfling1980}, \textcite[p.\ 2146]{neweymcfadden1994}, \textcite[p.\ 421]{schervish1995}, \textcite[p.\ 449]{lehmanncasella1998}, \textcite[pp.\ 51]{vandervaart2000}, \textcite[p.\ 386]{bickeldoksum2000}, or \textcite[p.\ 118]{davison2003}), the following result holds.%
	\begin{theorem}\label{th.lik}
		\myskip
				\begin{enumerate}[label=(\arabic*),leftmargin=*,align=left,itemsep=0mm,topsep=1mm]
			\item (Strong) consistency of maximum-likelihood estimators:
			 		\begin{align*}
						\hat{\th}_n=\hat{\th}_n(\bm{U}_1,\dots,\bm{U}_n)\omu{P}{\longrightarrow}{\text{a.s.}}\th_0\ (n\to\infty).
					\end{align*}
			\item\label{th.norm} Asymptotic normality of maximum-likelihood estimators:
			 		\begin{align*}
						\sqrt{n}\,I(\th_0)^{1/2}(\hat{\th}_n-\th_0)\omu{d}{\longrightarrow}{}N(\bm{0},I_p),
					\end{align*}
					where $I_p$ denotes the identity matrix in $\IR^{p\times p}$.
		\end{enumerate}
	\end{theorem}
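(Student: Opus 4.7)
The plan is to follow the standard program for maximum-likelihood theory, tailored to the Archimedean copula setting. Since the theorem is stated under regularity conditions with a long list of textbook references, the bulk of the argument is really the verification that the generator families of Table~\ref{tab.gen} satisfy those conditions; the asymptotic conclusions then follow from the classical likelihood results.

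For part (1), I would introduce $M_n(\th)=n^{-1}\sum_{i=1}^n l(\th;\bm{U}_i)$ and $M(\th)=\IE_{\th_0}[l(\th;\bm{U})]$. The strong law of large numbers gives pointwise a.s.\ convergence $M_n(\th)\to M(\th)$. Identifiability comes from the Kullback-Leibler inequality $M(\th)\le M(\th_0)$, with equality if and only if $c_{\th}=c_{\th_0}$ almost everywhere, which for each of the families in Table~\ref{tab.gen} reduces to $\th=\th_0$ since the map $\th\mapsto\psis{\th}{}$ is one-to-one on the respective parameter range. To promote pointwise convergence to convergence of argmaxima, I would impose compactness of $\Theta$ (or verify that $l(\th;\bm{u})\to-\infty$ outside a compact subset of $\Theta$) and combine continuity of $\th\mapsto l(\th;\bm{u})$ with a Wald-type uniform law of large numbers to deduce $\hat{\th}_n\to\th_0$ a.s.

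For part (2), I would Taylor-expand the score at $\th_0$. Since $\hat{\th}_n$ satisfies $\nabla l(\hat{\th}_n;\bm{U}_1,\dots,\bm{U}_n)=\bm{0}$, the mean value theorem yields
\begin{align*}
\bm{0}=\sum_{i=1}^n s_{\th_0}(\bm{U}_i)+H_n(\tilde{\th}_n)(\hat{\th}_n-\th_0),
\end{align*}
for some $\tilde{\th}_n$ on the segment joining $\hat{\th}_n$ and $\th_0$, with $H_n(\th)=\sum_{i=1}^n\nabla^2 l(\th;\bm{U}_i)$. Dividing by $\sqrt{n}$, the first sum converges in distribution to $N(\bm{0},I(\th_0))$ by the multivariate central limit theorem, after establishing the standard identities $\IE_{\th_0}[s_{\th_0}(\bm{U})]=\bm{0}$ and $\Cov_{\th_0}[s_{\th_0}(\bm{U})]=I(\th_0)$ via differentiation under the integral against $c_{\th_0}$. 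Dividing $H_n(\tilde{\th}_n)$ by $n$, the consistency from part (1) together with a uniform law of large numbers and the Bartlett identity $\IE_{\th_0}[\nabla^2 l(\th_0;\bm{U})]=-I(\th_0)$ give $n^{-1}H_n(\tilde{\th}_n)\to -I(\th_0)$ in probability. Slutsky's theorem then delivers $\sqrt{n}(\hat{\th}_n-\th_0)\omu{d}{\longrightarrow}{}N(\bm{0},I(\th_0)^{-1})$, and premultiplication by $I(\th_0)^{1/2}$ produces the stated form.

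The main obstacle is the verification of the regularity conditions for the concrete Archimedean families: that differentiation under the integral sign is justified by an integrable envelope, that $\th\mapsto\psis{\th}{(d)}(t)$ has sufficient smoothness near $\th_0$ (non-trivial because $\psi$ enters through its $d$-th derivative and $d$ may be large), that the Fisher information $I(\th_0)$ is finite and positive definite so that the CLT and the inversion of $I(\th_0)$ are meaningful, and that identifiability holds globally on the parameter ranges in Table~\ref{tab.gen}. Once these points are in place, the two conclusions follow directly from the classical likelihood theorems cited in the excerpt.
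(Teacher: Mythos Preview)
Your sketch is the standard textbook argument for consistency and asymptotic normality of maximum-likelihood estimators, and it is correct at the level of an outline. However, there is nothing to compare against: the paper does not prove Theorem~\ref{th.lik} at all. It is stated as a classical result, preceded by the phrase ``Under regularity conditions (see \ldots)'' together with a list of references to \textcite{coxhinkley1974}, \textcite{rohatgi1976}, \textcite{serfling1980}, \textcite{neweymcfadden1994}, \textcite{schervish1995}, \textcite{lehmanncasella1998}, \textcite{vandervaart2000}, \textcite{bickeldoksum2000}, and \textcite{davison2003}. The theorem is then invoked as background for constructing confidence intervals in Section~\ref{sec.ci}, but no proof or verification of the regularity conditions for the specific Archimedean families is given in the paper.

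Your proposal therefore goes further than the paper in that you at least outline what the regularity conditions are and where the work would lie in checking them (identifiability of $\th\mapsto\psis{\th}{}$, smoothness of $\th\mapsto\psis{\th}{(d)}$, integrable envelopes for differentiation under the integral, finiteness and positive definiteness of $I(\th_0)$). The paper simply takes all of this for granted.
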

\subsection{Generator derivatives and copula density}
	Applying maximum-likelihood estimation requires an efficient strategy for evaluating the (log-)density of the parametric Archimedean copula family to be estimated. The most important part is to know how to access the generator derivatives. As mentioned in the introduction, this requires to know both a tractable algebraic form of the derivatives and a procedure to numerically evaluate the formulas in an efficient way in terms of precision and run time.
	\par
	As mentioned in the introduction, it is often stated that a computer algebra system can be used to access a generator's derivatives. Such an approach has typically two major flaws:
		\begin{enumerate}[label=(\arabic*),leftmargin=*,align=left,itemsep=0mm,topsep=1mm]
		\item It is not trivial and sometimes not possible for a computer algebra system to find derivatives of higher order;
		\item Even if formulas are obtained, they are usually not provided in a form which is both numerically stable and sufficiently fast to evaluate. 
	\end{enumerate}
	We experienced these flaws when we tried to access the 50th derivative of a Gumbel generator $\psis{\theta}{}(t)$ with parameter $\theta=1.25$ at $t=15$. On a MacBook Pro running Max OS X 10.6.6, we aborted Mathematica 8 after ten minutes without obtaining a result. Maple 14 lead to the values 10\,628, -29\,800, and others (without warning) when computing $\psis{1.25}{(50)}(15)$ several times. Note the chaotic behavior of this deterministic problem; the values should of course be equal and positive! MATLAB 7.11.0 did return the correct value of (roughly) 1057, but failed to access $\psis{1.25}{(100)}(15)$ (aborted after ten minutes). Let us stress that carelessly using such programs in simulations may lead to wrong results. Apart from numerical issues, the formulas for the derivatives obtained from computer algebra systems can become quite large and thus rather slow to evaluate. They are therefore not suitable in large-scale simulation studies, for example, for goodness-of-fit tests (or simulations of their performance) involving a parametric bootstrap.
	\par
   In the following theorem we derive explicit formulas for the generator derivatives for all Archimedean families given in Table~\ref{tab.gen}. 
	\begin{theorem}\label{gen.der}
			\myskip
				\begin{enumerate}[label=(\arabic*),leftmargin=*,align=left,itemsep=0mm,topsep=1mm]
			\item\label{gen.der.AMH} For the family of Ali-Mikhail-Haq, 
					\begin{align*}
						(-1)^d\psis{\theta}{(d)}(t)=\frac{1-\theta}{\theta}\sideset{}{_{-d}}\Li(\theta\exp(-t)),\ t\in(0,\infty),\ d\in\IN_0,
					\end{align*}
					where $\sideset{}{_{s}}\Li(z)$ denotes the \textit{polylogarithm of order $s$ at $z$}. 
			\item For the family of Clayton,
					\begin{align*}
						(-1)^d\psis{\theta}{(d)}(t)=(d-1+1/\theta)_d(1+t)^{-(d+1/\theta)},\ t\in(0,\infty),\ d\in\IN_0, %
					\end{align*}
					where $(d-1+1/\theta)_d=\prod_{k=0}^{d-1}(k+1/\theta)=\frac{\Gamma(d+1/\theta)}{\Gamma(1/\theta)}$ denotes the falling factorial.%
			\item For the family of Frank, 
					\begin{align*}
						(-1)^d\psis{\theta}{(d)}(t)=\frac{1}{\theta}\sideset{}{_{-(d-1)}}\Li((1-e^{-\theta})\exp(-t)),\ t\in(0,\infty),\ d\in\IN_0.
					\end{align*}
			\item\label{gen.der.G} For the family of Gumbel, 
					\begin{align*}
						(-1)^d\psis{\theta}{(d)}(t)=\frac{\psis{\theta}{}(t)}{t^d}\var{P}{G}{d,\theta}(t^\alpha),\ t\in(0,\infty),\ d\in\IN,
					\end{align*}
					where
					\begin{align*}
						\var{P}{G}{d,\theta}(x)&=\sum_{k=1}^d\var{a}{G}{dk}(\theta)x^k,\\
						\var{a}{G}{dk}(\theta)&=(-1)^{d-k}\sum_{j=k}^d\theta^{-j}s(d,j)S(j,k)=\frac{d!}{k!}\sum_{j=1}^k\binom{k}{j}\binom{\alpha j}{d}(-1)^{d-j},\ k\in\{1,\dots,d\},
					\end{align*}
					and $s$ and $S$ denote the \textit{Stirling numbers of the first kind} and the \textit{second kind}, respectively. 
			\item For the family of Joe, 
					\begin{align*}
						(-1)^d\psis{\theta}{(d)}(t)=\frac{\exp(-t)}{\theta(1-\exp(-t))^{1-1/\theta}}\var{P}{J}{d,\theta}\biggl(\frac{\exp(-t)}{1-\exp(-t)}\biggr),\ t\in(0,\infty),\ d\in\IN,
					\end{align*}
					where
					\begin{align*}
						\var{P}{J}{d,\theta}(x)&=\sum_{k=1}^d\var{a}{J}{dk}(\theta)x^{k-1},\\
						\var{a}{J}{dk}(\theta)&=S(d,k)(k-1-1/\theta)_{k-1}=S(d,k)\frac{\Gamma(k-\alpha)}{\Gamma(1-\alpha)},\ k\in\{1,\dots,d\}.
					\end{align*}
		\end{enumerate}
	\end{theorem}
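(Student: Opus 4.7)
The plan is to treat the five families separately using two broad strategies: direct series expansion (for Ali--Mikhail--Haq, Clayton, Frank) and Faà di Bruno's formula combined with the homogeneity of the partial Bell polynomials $B_{d,k}$ (for Gumbel and Joe).

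For Ali--Mikhail--Haq I would rewrite $\psis{\theta}{}(t)=\frac{1-\theta}{\theta}\cdot\frac{\theta e^{-t}}{1-\theta e^{-t}}=\frac{1-\theta}{\theta}\sum_{k\ge 1}(\theta e^{-t})^k=\frac{1-\theta}{\theta}\sideset{}{_{0}}\Li(\theta e^{-t})$. Termwise differentiation produces a factor $(-k)^d$, and identifying $\sum_{k\ge 1}k^d z^k=\sideset{}{_{-d}}\Li(z)$ gives the claim. Frank is handled analogously via $-\log(1-z)=\sum_{k\ge 1}z^k/k$, which recasts $\psis{\theta}{}(t)=\frac{1}{\theta}\sideset{}{_{1}}\Li((1-e^{-\theta})e^{-t})$; termwise differentiation and absorption of one factor of $k$ in the denominator yields the polylogarithm of order $-(d-1)$. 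The Clayton case is the most elementary: iterating the generalized power rule on $(1+t)^{-1/\theta}$ produces the Pochhammer prefactor $(d-1+1/\theta)_d$ directly.

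For Gumbel and Joe the workhorse is Faà di Bruno's formula. Writing $\psis{\theta}{}(t)=f(g(t))$ (modulo an additive constant in the Joe case) with $f(x)=e^{-x}$, $g(t)=t^\alpha$, $\alpha=1/\theta$ (Gumbel) and $f(x)=-x^\alpha$, $g(t)=1-e^{-t}$ (Joe), one obtains
\begin{align*}
\psis{\theta}{(d)}(t)=\sum_{k=1}^d f^{(k)}(g(t))\,B_{d,k}\bigl(g^\prime(t),g^{\prime\prime}(t),\dots,g^{(d-k+1)}(t)\bigr).
\end{align*}
In the Joe case $g^{(m)}(t)=(-1)^{m-1}e^{-t}$, so the scaling property $B_{d,k}(bx_1,b^2x_2,\dots)=b^d B_{d,k}(x_1,x_2,\dots)$ pulls out $e^{-kt}$ and the alternating sign pattern, leaving $B_{d,k}(1,1,\dots,1)=S(d,k)$ up to a factor $(-1)^{d+k}$. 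Combining this with $f^{(k)}(g(t))=-(\alpha)_k(1-e^{-t})^{\alpha-k}$ and using the identity $\theta(\alpha)_k=(-1)^{k-1}(k-1-\alpha)_{k-1}$ (which follows from $\theta\alpha=1$) gives $\var{a}{J}{dk}(\theta)=S(d,k)(k-1-1/\theta)_{k-1}$ after grouping factors. For Gumbel, $g^{(m)}(t)=(\alpha)_m t^{\alpha-m}$, so the same scaling law produces $t^{\alpha k-d}B_{d,k}((\alpha)_1,\dots,(\alpha)_{d-k+1})$ for each summand; since $f^{(k)}(g(t))=(-1)^k\psis{\theta}{}(t)$, this immediately identifies $\var{a}{G}{dk}(\theta)=(-1)^{d-k}B_{d,k}((\alpha)_1,\dots,(\alpha)_{d-k+1})$.

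The main obstacle will be to express this last Bell polynomial in the two closed forms stated in~\ref{gen.der.G}. I would start from the bivariate exponential generating function
\begin{align*}
\sum_{d\ge k}B_{d,k}(x_1,x_2,\dots)\,\frac{z^d}{d!}=\frac{1}{k!}\Bigl(\sum_{n\ge 1}x_n\,\frac{z^n}{n!}\Bigr)^k,
\end{align*}
which with $x_n=(\alpha)_n$ collapses to $\frac{1}{k!}\bigl((1+z)^\alpha-1\bigr)^k$, since $\sum_{n\ge 1}(\alpha)_n z^n/n!=(1+z)^\alpha-1$. Expanding by the binomial theorem in the variable $(1+z)^\alpha$ and reading off the coefficient of $z^d/d!$ yields the binomial-coefficient form of $\var{a}{G}{dk}$. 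For the Stirling form I would rewrite $(1+z)^\alpha=e^{\alpha\log(1+z)}$ and exploit the EGFs $\sum_{d}s(d,j)z^d/d!=(\log(1+z))^j/j!$ and $\sum_{j}S(j,k)y^j/j!=(e^y-1)^k/k!$ to rearrange the summed generating function $\sum_{k\ge 0}u^k/k!\bigl((1+z)^\alpha-1\bigr)^k=\exp\!\bigl(u(e^{\alpha\log(1+z)}-1)\bigr)$ into $\sum_{d,k}u^k\frac{z^d}{d!}\sum_{j=k}^d\alpha^j s(d,j)S(j,k)$. Matching coefficients yields the first form, and equivalence of the two forms then follows automatically, completing the Gumbel case.
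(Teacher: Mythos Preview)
Your proposal is correct. For Ali--Mikhail--Haq, Clayton, and Frank it coincides with the paper's argument (series expansion and termwise differentiation). For Gumbel and Joe, however, you take a genuinely different route. The paper handles Gumbel by expanding $\exp(-t^\alpha)$ as a power series, differentiating termwise, invoking the falling-factorial identity $(\alpha k)_d=\sum_{j}s(d,j)(\alpha k)^j$, and then using the exponential-polynomial identity $e^{-x}\sum_{k}k^j x^k/k!=\sum_{k}S(j,k)x^k$ from \textcite{boyadzhiev2009}; the binomial form of $\var{a}{G}{dk}$ is then extracted in a second step via the explicit formula for $S(j,k)$. For Joe, the paper substitutes $x=e^{-t}$ so that $\tfrac{d}{dt}$ becomes $-x\tfrac{d}{dx}$ and applies the known expansion $(x\tfrac{d}{dx})^d=\sum_k S(d,k)\,x^k(\tfrac{d}{dx})^k$. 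Your Fa\`a di Bruno approach treats both families uniformly: the homogeneity of $B_{d,k}$ isolates the $t$- (resp.\ $e^{-t}$-) dependence, and the closed forms for $\var{a}{G}{dk}$ drop out of the single generating-function identity $\sum_{d}B_{d,k}\bigl((\alpha)_1,(\alpha)_2,\dots\bigr)z^d/d!=\tfrac{1}{k!}\bigl((1+z)^\alpha-1\bigr)^k$, read once via the binomial theorem and once via the composition of the Stirling EGFs. The advantage of your route is that it is self-contained (no external exponential-polynomial lemma) and delivers both representations of $\var{a}{G}{dk}$ in one stroke; the paper's approach, by contrast, makes the combinatorial role of the exponential polynomials more visible and avoids any reference to partial Bell polynomials.
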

	\begin{proof}
			\myskip
				\begin{enumerate}[label=(\arabic*),leftmargin=*,align=left,itemsep=0mm,topsep=1mm]
			\item The generator of the Archimedean family of Ali-Mikhail-Haq is of the form $\psis{\theta}{}(t)=\sum_{k=1}^\infty p_k\exp(-kt)$, $t\in[0,\infty)$, with probability mass function $(p_k)_{k=1}^\infty$ as given in Table~\ref{tab.gen}. This implies that $(-1)^d\psis{\theta}{(d)}(t)=\sum_{k=1}^\infty p_kk^d\exp(-kt)$ from which the statement easily follows from the definition of the polylogarithm as $\sideset{}{_{s}}\Li(z)=\sum_{k=1}^\infty z^k/k^s$.
			\item The result for Clayton is straightforward to obtain by taking the derivatives. 
			\item Similar to \ref{gen.der.AMH}. 
			\item Now consider Gumbel's family. Writing the generator in terms of the exponential series and differentiating the summands, leads to $\psis{\theta}{(d)}(t)=\sum_{k=1}^\infty(-1)^k/k!(\alpha k)_d t^{\alpha k-d}$, where $\alpha=1/\theta$. Since for $d\in\IN$, $(\alpha k)_d=\sum_{j=1}^ds(d,j)(\alpha k)^j$, one obtains $\psis{\theta}{(d)}(t)=t^{-d}\sum_{k=1}^\infty(-t^\alpha)^k/k!\sum_{j=1}^ds(d,j)(\alpha k)^j=t^{-d}\sum_{j=1}^d\alpha^js(d,j)\sum_{k=1}^\infty k^j(-t^\alpha)^k/k!$. Note that $\exp(-x)\sum_{k=0}^\infty k^jx^k/k!$ is the \textit{$j$th exponential polynomial} and equals $\sum_{k=0}^jS(j,k)$ $\cdot x^k$; see \textcite{boyadzhiev2009}. With $x=-t^\alpha$ and noting that the summand for $k=0$ is zero, we obtain $\psis{\theta}{(d)}(t)=\psis{\theta}{}(t)t^{-d}\sum_{j=1}^d\alpha^js(d,j)\sum_{k=1}^jS(j,k)(-t^\alpha)^k$. Interchanging the order of summation leads to $\psis{\theta}{(d)}(t)=\psis{\theta}{}(t)t^{-d}\sum_{k=1}^d(-t^\alpha)^k\sum_{j=k}^d\alpha^js(d,j)S(j,k)$ $=\psis{\theta}{}(t)\sum_{k=1}^dt^{\alpha k-d}(-1)^k\sum_{j=k}^d\alpha^js(d,j)S(j,k)$ from which the result about $(-1)^d\psis{\theta}{(d)}$ directly follows. For the last equality in the statement about $\var{a}{G}{dk}(\theta)$ note that $k!/d!\var{a}{G}{dk}(\theta)=(-1)^{d-k}k!/d!\sum_{j=0}^d\alpha^js(d,j)S(j,k)=(-1)^{d-k}/d!\sum_{j=0}^d\alpha^js(d,j)\sum_{l=0}^k$ $\cdot\binom{k}{l}(-1)^{k-l}l^j=(-1)^{d-k}/d!\sum_{l=0}^k\binom{k}{l}(-1)^{k-l}\sum_{j=0}^d(\alpha l)^js(d,j)=(-1)^d\sum_{l=0}^k\binom{k}{l}\binom{\alpha l}{d}$ $\cdot(-1)^l$ from which the result follows.
		   \item For Joe's family, $(-1)^d\psis{\theta}{(d)}(t)=(-1)^{d+1}\frac{d^d}{dt^d}(1-\exp(-t))^\alpha$, $d\in\IN$, where $\alpha=1/\theta$. Letting $x=\exp(-t)$, this equals $-(x\frac{d}{dx})^d(1-x)^\alpha$. The operator $x\frac{d}{dx}$ is investigated in \textcite{boyadzhiev2009}. It follows from the results there that $(-1)^d\psis{\theta}{(d)}(t)=-\sum_{k=1}^dS(d,k)(-x)^k(\alpha)_k(1-x)^{\alpha-k}=-(1-x)^\alpha\sum_{k=1}^dS(d,k)(\alpha)_k(-x/(1-x))^k$. Thus, $(-1)^d\psis{\theta}{(d)}(t)=\alpha(1-x)^\alpha\sum_{k=1}^dS(d,k)(k-1-\alpha)_{k-1}(x/(1-x))^k$. Resubstituting leads to the result as stated. 
		\end{enumerate}
	\end{proof}
	With the notation as in Theorem \ref{gen.der}, we obtain the following representations for the densities of the Archimedean families of Ali-Mikhail-Haq, Clayton, Frank, Gumbel, and Joe. 
	\begin{corollary}\label{c.formulas}
		\myskip
			\begin{enumerate}[label=(\arabic*),leftmargin=*,align=left,itemsep=0mm,topsep=1mm]
		\item For the family of Ali-Mikhail-Haq, 
				\begin{align*}
					c_{\theta}(\bm{u})=\frac{(1-\theta)^{d+1}}{\theta^2}\frac{\var{h}{A}{\theta}(\bm{u})}{\prod_{j=1}^du_j^2}\sideset{}{_{-d}}\Li(\var{h}{A}{\theta}(\bm{u})),
				\end{align*}
				where $\var{h}{A}{\theta}(\bm{u})=\theta\prod_{j=1}^d\frac{u_j}{1-\theta(1-u_j)}$.
		\item For the family of Clayton,
				\begin{align*}
					c_{\theta}(\bm{u})=\prod_{k=0}^{d-1}(\theta k+1)\biggl(\,\prod_{j=1}^du_j\biggr)^{-(1+\theta)}(1+t_\theta(\bm{u}))^{-(d+1/\theta)}.
				\end{align*}
		\item For the family of Frank, 
				\begin{align*}
					c_{\theta}(\bm{u})=\biggl(\frac{\theta}{1-e^{-\theta}}\biggr)^{d-1}\sideset{}{_{-(d-1)}}\Li(\var{h}{F}{\theta}(\bm{u}))\frac{\exp(-\theta\sum_{j=1}^du_j)}{\var{h}{F}{\theta}(\bm{u})},
				\end{align*}
				where $\var{h}{F}{\theta}(\bm{u})=(1-e^{-\theta})^{1-d}\prod_{j=1}^d(1-\exp(-\theta u_j))$.
		\item For the family of Gumbel, 
				\begin{align*}
					c_{\theta}(\bm{u})=\theta^dC_{\theta}(\bm{u})\frac{\prod_{j=1}^d(-\log u_j)^{\theta-1}}{t_\theta(\bm{u})^d\prod_{j=1}^du_j}\var{P}{G}{d,\theta}(t_\theta(\bm{u})^{1/\theta}).
				\end{align*} 
		\item For the family of Joe, 
				\begin{align*}
					c_{\theta}(\bm{u})=\theta^{d-1}\frac{\prod_{j=1}^d(1-u_j)^{\theta-1}}{(1-\var{h}{J}{\theta}(\bm{u}))^{1-1/\theta}}\var{P}{J}{d,\theta}\biggl(\frac{\var{h}{J}{\theta}(\bm{u})}{1-\var{h}{J}{\theta}(\bm{u})}\biggr),
				\end{align*}
				where $\var{h}{J}{\theta}(\bm{u})=\prod_{j=1}^d(1-(1-u_j)^\theta)$.
	\end{enumerate}
	\end{corollary}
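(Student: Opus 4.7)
The plan is to combine Theorem~\ref{gen.der} with the density identity~\eqref{c}, rewritten in the manifestly positive form
\begin{align*}
c_\theta(\bm{u})=\bigl((-1)^d\psis{\theta}{(d)}(t_\theta(\bm{u}))\bigr)\prod_{j=1}^d\bigl(-(\psiis{\theta})^\prime(u_j)\bigr),
\end{align*}
which is legitimate because $\psiis{\theta}$ is decreasing. Since the bracketed factor is supplied in closed form by Theorem~\ref{gen.der} for each of the five families, the corollary reduces to a family-by-family exercise: invert $\psis{\theta}{}$, differentiate, multiply the two factors, and simplify.

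Inverting the generators in Table~\ref{tab.gen} yields the elementary expressions $\psiis{\theta}(u)=\log((1-\theta(1-u))/u)$ for Ali--Mikhail--Haq, $\psiis{\theta}(u)=u^{-\theta}-1$ for Clayton, $\psiis{\theta}(u)=-\log((1-e^{-\theta u})/(1-e^{-\theta}))$ for Frank, $\psiis{\theta}(u)=(-\log u)^\theta$ for Gumbel, and $\psiis{\theta}(u)=-\log(1-(1-u)^\theta)$ for Joe; each of these is routine to differentiate.

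The structural observation that makes the densities collapse to such compact forms is that in every family either $t_\theta(\bm u)$ or a simple function of it factorises across the $u_j$. For Ali--Mikhail--Haq, Frank, and Joe, $\exp(-\psiis{\theta}(u_j))$ is elementary in $u_j$, so $\exp(-t_\theta(\bm u))=\prod_j\exp(-\psiis{\theta}(u_j))$ is, up to a multiplicative constant, equal to $\var{h}{A}{\theta}(\bm u)$, $\var{h}{F}{\theta}(\bm u)$, and $\var{h}{J}{\theta}(\bm u)$, respectively --- precisely the arguments at which the polylogarithms and $\var{P}{J}{d,\theta}$ are evaluated in the claim. For Gumbel, the identity $\psis{\theta}{}(t_\theta(\bm u))=C_\theta(\bm u)$ absorbs the $\psis{\theta}{}(t)$ prefactor appearing in Theorem~\ref{gen.der}\ref{gen.der.G}, thereby producing the factor $C_\theta(\bm u)$ in the density while the polynomial is evaluated at $t_\theta(\bm u)^{1/\theta}$. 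For Clayton the only simplification is to rewrite the falling factorial as $\theta^{-d}\prod_{k=0}^{d-1}(\theta k+1)$; the $\theta^{-d}$ then cancels the $\theta^d$ produced by $\prod_j(-\theta u_j^{-\theta-1})$.

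I do not expect a genuine obstacle here: the argument is substitution rather than invention. The main source of potential error is the bookkeeping of signs (both $(\psiis{\theta})^\prime$ and $\psis{\theta}{(d)}$ carry alternating signs that must line up) and of multiplicative constants in $\theta$ and $(1-e^{-\theta})$ --- for instance, tracking how the $(1-e^{-\theta})^{1-d}$ in $\var{h}{F}{\theta}$ balances against the $1/\theta$ prefactor of Theorem~\ref{gen.der} to produce the normalised prefactor $(\theta/(1-e^{-\theta}))^{d-1}$ in the Frank case. I would therefore carry out each family independently and cross-check against a low-dimensional symbolic verification.
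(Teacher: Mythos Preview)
Your proposal is correct and follows exactly the route the paper takes: the paper's own proof is the single sentence ``The proof is tedious but straightforward to obtain from Formula~\eqref{c} and the results from Theorem~\ref{gen.der},'' and your write-up simply fills in the substitution-and-simplification details that the authors leave to the reader. If anything, your structural remarks about how $\exp(-t_\theta(\bm u))$ factorises into the $h$-functions and how the Gumbel prefactor becomes $C_\theta(\bm u)$ are more explicit than what appears in the paper.
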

	\begin{proof}
		The proof is tedious but straightforward to obtain from Formula (\ref{c}) and the results from Theorem \ref{gen.der}.
	\end{proof}
	The following remarks stress the importance of Theorem \ref{gen.der} and Corollary \ref{c.formulas}.
	\begin{remark}
			\myskip
			\begin{enumerate}[label=(\arabic*),leftmargin=*,align=left,itemsep=0mm,topsep=1mm]
			\item Recursive formulas for the generator derivatives for some Archimedean families were presented by \textcite{barbegenestghoudiremillard1996} and \textcite{wuvaldezsherris2007}. In contrast, Theorem \ref{gen.der} provides explicit formulas. As seen from Corollary \ref{c.formulas}, this allows us to explicitly compute the densities of the corresponding well-known and widely used Archimedean families, even in large dimensions. Furthermore, it allows us to compute conditional distribution functions based on these families and important statistical quantities such as the Kendall distribution function, which is of interest, for example, in goodness-of-fit testing; see \textcite{genestquessyremillard2006}, \textcite{genestremillardbeaudoin2009}, or \textcite{heringhofert2011}. Among others, note that extreme value copulas rarely have an explicit form of the density, the important Gumbel family can now be added to this list.
			\item The derivatives presented in Theorem \ref{gen.der} also play an important role in asymmetric extensions of Archimedean copulas. For example, consider a \textit{Khoudraji-transformed Archimedean copula} $C$, given by
					\begin{align*}
						C(\bm{u})=C_\psi(u_1^{\alpha_1},\dots,u_d^{\alpha_d})\Pi(u_1^{1-\alpha_1},\dots,u_d^{1-\alpha_d}),
					\end{align*}
					where $C_\psi$ denotes an Archimedean copula generated by $\psi$, $\Pi$ denotes the independence copula, and $\alpha_j\in[0,1]$, $j\in\{1,\dots,d\}$, are parameters. Given the generator derivatives, the density of a Khoudraji-transformed Archimedean copula is given by
					\begin{align*}
						c(\bm{u})=\ \sum_{\mathclap{J\subseteq\{1,\dots,d\}}}\ \psi_V^{(\lvert J\rvert)}\biggl(\,\sum_{j=1}^d\psi_V^{-1}(u_j^{\alpha_j})\biggr)\prod_{j\in J}\alpha_j(\psiis{V})^\prime(u_j^{\alpha_j})\prod_{j\notin J}(1-\alpha_j)u_j^{-\alpha_j}.
					\end{align*}
					This makes maximum likelihood estimation for these copulas feasible; see \textcite{hofertvrins2011} for an application.								
			\item As pointed out by \textcite[pp.\ 117]{hofert2010c}, new Archimedean copulas are often constructed with simple transformations of the generators addressed in Theorem \ref{gen.der}. The results in Theorem \ref{gen.der} might therefore carry over to other Archimedean families. In fact, one example for such a transformation is the outer power transformation addressed in Section \ref{sec.multi.param}. 
			\item For an Archimedean generator $\psi$ with unknown derivatives but known $F=\LSi[\psi]$, \textcite{hofertmaechlermcneil2011a} suggested to approximate $(-1)^d\psis{}{(d)}$ via
					\begin{align*}
						(-1)^d\psis{}{(d)}(t)\approx\frac{1}{m}\sum_{k=1}^mV_k^d\exp(-V_kt),\ t\in(0,\infty),
					\end{align*}
					where $V_k\sim F$, $k\in\{1,\dots,m\}$, are realizations of i.i.d.\ random variables following $F=\LSi[\psi]$. In the conducted simulation study, this approximation turned out to be quite accurate. Furthermore, it is typically straightforward to implement. However, such a Monte Carlo approach is of course slower than having a direct formula for the generator derivatives at hand. 
		\end{enumerate}
	\end{remark}
\section{Sample size $n$ vs dimension $d$}\label{sec.nvsd}
	The results of \textcite{hofertmaechlermcneil2011a} indicate that the root mean squared error (``RMSE'') is decreasing in the dimension for all other parameters (Archimedean family, dependence level measured by Kendall's tau, and sample size) fixed. This may be intuitive for exchangeable copulas since the curse of dimensionality is circumvented by symmetry. In this section we briefly investigate how the RMSE decreases in the dimension. Figure~\ref{fig.n.vs.d} shows a clear picture. For fixed Archimedean family (Ali-Mikhail-Haq (``AMH''), Clayton, Frank, Gumbel, and Joe), dependence level measured by Kendall's tau ($\tau\in\{0.25,0.5,0.75\}$), and sample size ($n\in\{20,50,100,200\}$), the RMSE (estimated based on $N=500$ replications) is decreasing in the dimension ($d\in\{5,10,20,50,100\}$). As the log-log plot further reveals, the decrease of the RMSE in the dimension $d$ is of the same order as in the sample size $n$, that is, the mean squared error (``MSE'') satisfies
	\begin{align*}
		\text{MSE}\propto\frac{1}{nd}.
	\end{align*}
   Although this behavior in the sample size $n$ is well-known, the behavior in the dimension $d$ is rather impressive since it contradicts the findings of \textcite{weiss2010}, for example. In the latter work, conclusions are drawn based on simulations only involving small dimensions. In small dimensions, however, numerical problems are often not (regarded) as severe as in larger dimensions. Sometimes, they are simply not solved correctly. However, according to our experience, we believe that the larger the dimension of interest is, the more involved numerical issues typically are. This will certainly become more important in the future as applications are often high-dimensional.
	\begin{figure}[htbp]
	 	\centering
	   \includegraphics[width=0.78\textwidth]{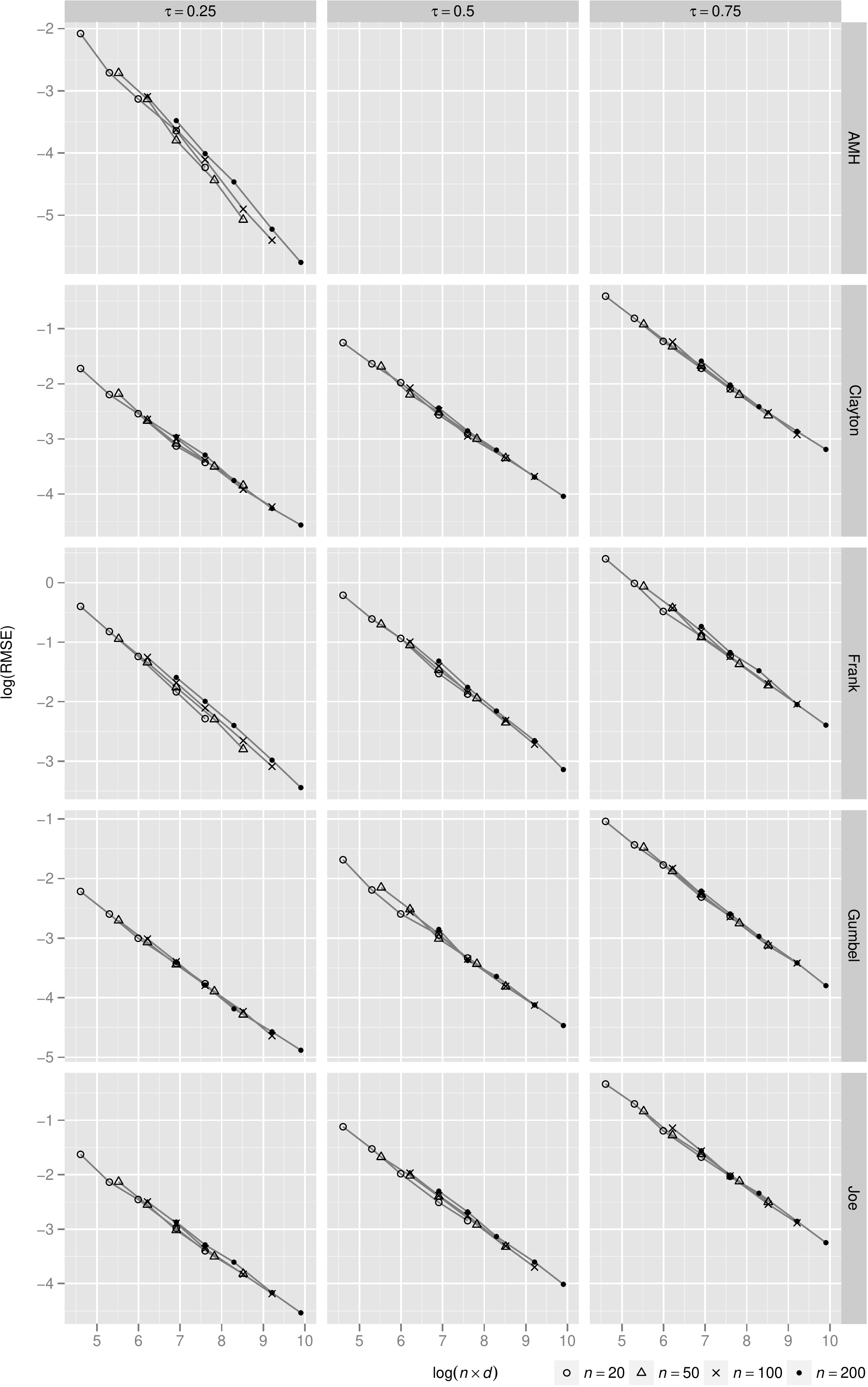}%
	 	\setcapwidth{0.78\textwidth}%
	 	\caption{log-RMSE ($N=500$ replications) as a function of the logarithm of $n\cdot d$. The plot indicates that the mean squared error satisfies $\text{MSE}\propto1/(nd)$ for all families and dependencies. Note that the family of AMH is limited to $\tau\in[0,1/3)$.} 
	 	\label{fig.n.vs.d}
	\end{figure}
\section{Constructing confidence intervals}\label{sec.ci}
   In this section, we describe different ways of how to obtain confidence intervals for the copula parameter vector $\th_0$. 
\subsection{Fisher information}
	It follows from Theorem \ref{th.lik} \ref{th.norm} that
	\begin{align*}
		(\hat{\th}_n-\th_0)\T\, nI(\th_0)(\hat{\th}_n-\th_0)\omu{d}{\longrightarrow}{}\chi^2_p\ (n\to\infty).
	\end{align*}
	This result remains valid if $I(\th_0)$ is replace by a consistent estimator $\widehat{I(\th_0)}$. Therefore, an asymptotic $1-\alpha$ \textit{confidence region} for $\th_0$ is given by
	\begin{align*}
		\Bigl\{\th\in\Theta\,:\,(\hat{\th}_n-\th)\T\, n\widehat{I(\th_0)}(\hat{\th}_n-\th)\le q_{\chi^2_p}(1-\alpha)\Bigr\},
	\end{align*}
	where $q_{\chi^2_p}(1-\alpha)$ denotes the $(1-\alpha)$-quantile of the chi-square distribution with $p$ degrees of freedom. In the one-parameter case, an asymptotic $1-\alpha$ \textit{confidence interval} for $\theta_0$ is given by
	\begin{align*}
		\Biggl[\hat{\theta}_n-\frac{z_{1-\alpha/2}}{\sqrt{n\widehat{I(\theta)}}},\hat{\theta}_n+\frac{z_{1-\alpha/2}}{\sqrt{n\widehat{I(\theta)}}}\Biggr],
	\end{align*}
	where $z_{1-\alpha/2}=\Phi^{-1}(1-\alpha/2)$ denotes the $(1-\alpha/2)$-quantile of the standard normal distribution function. 
	\par
	For the estimator $\widehat{I(\th_0)}$, there are several options, described in what follows. Assuming the derivatives to exist, the \textit{observed information} is defined as
	\begin{align*}
		J(\th;\bm{u}_1,\dots,\bm{u}_n)&=-\nabla\nabla\T l(\th;\bm{u}_1,\dots,\bm{u}_n)=\sum_{i=1}^n-\nabla\nabla\T l(\th;\bm{u}_i)\omu{}{=}{p=1}\sum_{i=1}^n-\frac{d^2}{d\theta^2}l(\theta;\bm{u}_i).
	\end{align*}
	Under regularity conditions (see the references in Section \ref{sec.lik}), the Fisher information satisfies
	\begin{align*}
		I(\th)=\IE[J(\th;\bm{U})]=\IE[-\nabla\nabla\T l(\th;\bm{U})]\omu{}{=}{p=1}\IE\biggl[-\frac{d^2}{d\theta^2}l(\theta;\bm{U})\biggr],
	\end{align*}
	that is, the Fisher information is the negative Hessian of the score function. From this and the definition of the Fisher information, the following choices for $\widehat{I(\th_0)}$ naturally arise (see also \textcite[pp.\ 2157]{neweymcfadden1994} including conditions for consistency):
	\begin{align}
		I(\hat{\th}_n)&=\IE_{\hat{\th}_n}\bigl[s_{\hat{\th}_n}(\bm{U})s_{\hat{\th}_n}(\bm{U})\T\,\bigr]\label{I.theta.hat}\\		\hat{I}^{(1)}(\hat{\th}_n)&=\frac{1}{n}\sum_{i=1}^ns_{\hat{\th}_n}(\bm{u}_i)s_{\hat{\th}_n}(\bm{u}_i)\T\label{I.1.hat.theta.hat}\\
		\hat{I}^{(2)}(\hat{\th}_n)&=\frac{1}{n}\sum_{i=1}^nJ(\hat{\th}_n;\bm{u}_i)=\frac{1}{n}\sum_{i=1}^n-\nabla\nabla\T l(\hat{\th}_n;\bm{u}_i)\label{I.2.hat.theta.hat}
	\end{align}
	The \textit{expected information} $I(\hat{\th}_n)$	is often difficult to obtain. Furthermore, \textcite{efronhinkley1978} argue for $\hat{I}^{(2)}(\hat{\th}_n)$ in favor of $I(\hat{\th}_n)$. The estimator $\hat{I}^{(1)}(\hat{\th}_n)$ is found much less in the literature, a reference being \textcite[p.\ 2157]{neweymcfadden1994}. The reason why we state it here is that there are cases where the second-order partial derivatives are (much) more complicated to access than the first-order ones based on the score function. 
		\par
		The following proposition provides the score functions for the one-parameter Archimedean families given in Table~\ref{tab.gen}.
		\begin{proposition}\label{score}
			\myskip
				\begin{enumerate}[label=(\arabic*),leftmargin=*,align=left,itemsep=0mm,topsep=1mm]
			\item For the family of Ali-Mikhail-Haq, 
					\begin{align*}
						s_{\th}(\bm{u})=-\frac{d+1}{1-\theta}-\frac{1}{\theta}+\var{b}{A}{\theta}(\bm{u})+\biggl(\var{b}{A}{\theta}(\bm{u})+\frac{1}{\theta}\biggr)\frac{\sideset{}{_{-(d+1)}}\Li(\var{h}{A}{\theta}(\bm{u}))}{\sideset{}{_{-d}}\Li(\var{h}{A}{\theta}(\bm{u}))},
					\end{align*}
					where $\var{b}{A}{\theta}(\bm{u})=\sum_{j=1}^d\frac{1-u_j}{1-\theta(1-u_j)}$.
			\item For the family of Clayton,
					\begin{align*}
						s_{\th}(\bm{u})=\sum_{k=0}^{d-1}\frac{k}{\theta k+1}-\sum_{j=1}^d\log u_j+\frac{1}{\theta^2}\log(1+t_\theta(\bm{u}))-(d+1/\theta)\frac{t_\theta(\bm{u})}{1+t_\theta(\bm{u})}.
					\end{align*}
			\item For the family of Frank, 
					\begin{align*}
						s_{\th}(\bm{u})&=\frac{d-1}{\theta}-\sum_{j=1}^d\frac{u_j}{1-\exp(-\theta u_j)}+\biggl(\sum_{j=1}^d\frac{u_j\exp(-\theta u_j)}{1-\exp(-\theta u_j)}-\frac{(d-1)e^{-\theta}}{1-e^{-\theta}}\biggr)\\
						 &\phantom{={}}\cdot\frac{\sideset{}{_{-d}}\Li(\var{h}{F}{\theta}(\bm{u}))}{\sideset{}{_{-(d-1)}}\Li(\var{h}{F}{\theta}(\bm{u}))}.
					\end{align*}
			\item For the family of Gumbel, 
					\begin{align*}
							s_{\th}(\bm{u})&=\frac{d-\log C_\theta(\bm{u})\log(-\log C_\theta(\bm{u}))}{\theta}-\var{b}{G}{\theta}(\bm{u})\biggl(d-\frac{\log C_\theta(\bm{u})}{\theta}\biggr)\\
							&\phantom{={}}+\sum_{j=1}^d\log(-\log u_j)+\frac{\var{Q}{G}{d,\theta,\bm{u}}(t_\theta(\bm{u})^{1/\theta})}{\theta\var{P}{G}{d,\theta}(t_\theta(\bm{u})^{1/\theta})},
					\end{align*}		
					where $\var{b}{G}{\theta}(\bm{u})=\sum_{j=1}^d\log(-\log u_j)\psii(u_j)/t_\theta(\bm{u})$ and $\var{Q}{G}{d,\theta,\bm{u}}(x)=\sum_{k=1}^d\var{a}{G}{dk}(\theta,\bm{u})x^k$ with $\var{a}{G}{dk}(\theta,\bm{u})=k\bigl(\var{b}{G}{\theta}(\bm{u})-\frac{1}{\theta}\log t_\theta(\bm{u})\bigr)\var{a}{G}{dk}(\theta)-(-1)^{d-k}\sum_{j=k}^djs(d,j)S(j,k)\theta^{-j}$.
			\item For the family of Joe, 
					\begin{align*}
							s_{\th}(\bm{u})&=\frac{d-1}{\theta}+\sum_{j=1}^d\log(1-u_j)-\frac{\log(1-\var{h}{J}{\theta}(\bm{u}))}{\theta^2}+\frac{(1-\frac{1}{\theta})\var{h}{J}{\theta}(\bm{u})}{1-\var{h}{J}{\theta}(\bm{u})}\var{b}{J}{\theta}(\bm{u})\\
						&\phantom{={}}+\frac{\var{Q}{J}{d,\theta,\bm{u}}\bigl(\var{h}{J}{\theta}(\bm{u})/(1-\var{h}{J}{\theta}(\bm{u}))\bigr)}{\var{P}{J}{d,\theta}\bigl(\var{h}{J}{\theta}(\bm{u})/(1-\var{h}{J}{\theta}(\bm{u}))\bigr)},									
					\end{align*}
					where $\var{b}{J}{\theta}(\bm{u})=\sum_{j=1}^d\frac{-\log(1-u_j)(1-u_j)^\theta}{1-(1-u_j)^\theta}$ and $\var{Q}{J}{d,\theta,\bm{u}}(x)=\sum_{k=1}^d\var{a}{J}{dk}(\theta,\bm{u})x^{k-1}$ with $\var{a}{J}{dk}(\theta,\bm{u})=\var{a}{J}{dk}(\theta)\bigl(\frac{1}{\theta}\sum_{j=1}^{k-1}\frac{1}{\theta j-1}+(k-1)\var{b}{J}{\theta}(\bm{u})/(1-\var{h}{J}{\theta}(\bm{u}))\bigr)$.
		\end{enumerate}
		\end{proposition}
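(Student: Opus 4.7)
The plan is to differentiate $l(\theta;\bm u) = \log c_\theta(\bm u)$ with respect to $\theta$ for each family, starting from the explicit density representations in Corollary \ref{c.formulas}. In every case $\log c_\theta$ decomposes cleanly into a sum of elementary pieces (powers and sums of logarithms, the copula value $C_\theta(\bm u)$, a polylogarithm, or a polynomial in a simple function of $t_\theta(\bm u)$), so the task is to differentiate each piece via the chain rule and to collect the result into the compact form claimed in the statement.

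For the AMH and Frank families, whose densities involve $\Li_{-d}(\var{h}{A}{\theta}(\bm u))$ and $\Li_{-(d-1)}(\var{h}{F}{\theta}(\bm u))$, the key identity is $\frac{d}{dz}\Li_s(z) = \Li_{s-1}(z)/z$, immediate from the series $\Li_s(z) = \sum_{k\ge 1}z^k/k^s$. Combined with the elementary computation $\partial_\theta\log \var{h}{A}{\theta}(\bm u) = 1/\theta + \var{b}{A}{\theta}(\bm u)$ (and an analogous identity for Frank, which is where the bracketed factor in its score originates), this yields the stated polylogarithm ratios $\Li_{-(d+1)}/\Li_{-d}$ and $\Li_{-d}/\Li_{-(d-1)}$. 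The Clayton case is direct: differentiating $\sum_{k=0}^{d-1}\log(\theta k+1)$, $-(1+\theta)\sum_j\log u_j$, and $-(d+1/\theta)\log(1+t_\theta(\bm u))$ term by term, using $\partial_\theta t_\theta(\bm u) = \sum_j\partial_\theta((u_j^{-\theta}-1)/\theta)$, and rearranging gives the claim.

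The main obstacle is Gumbel and Joe, where the polynomials $\var{P}{G}{d,\theta}$ and $\var{P}{J}{d,\theta}$ depend on $\theta$ both through their coefficients and through their arguments. I would split $\partial_\theta\log\var{P}{G}{d,\theta}(t_\theta(\bm u)^{1/\theta})$ into a coefficient contribution and an argument contribution. The coefficient part comes from $\partial_\theta \var{a}{G}{dk}(\theta) = -\theta^{-1}(-1)^{d-k}\sum_{j=k}^d j\theta^{-j}s(d,j)S(j,k)$, which is exactly the non-$\var{a}{G}{dk}(\theta)$ summand inside $\var{a}{G}{dk}(\theta,\bm u)$. The argument part uses $\partial_\theta t_\theta(\bm u) = \sum_j(-\log u_j)^\theta\log(-\log u_j) = t_\theta(\bm u)\var{b}{G}{\theta}(\bm u)$, exhibiting $\var{b}{G}{\theta}(\bm u)$ as the logarithmic $\theta$-derivative of $t_\theta(\bm u)$, so that $\partial_\theta\log t_\theta(\bm u)^{1/\theta} = \var{b}{G}{\theta}(\bm u)/\theta - \log t_\theta(\bm u)/\theta^2$; this produces the remaining $k(\var{b}{G}{\theta}(\bm u) - \log t_\theta(\bm u)/\theta)\var{a}{G}{dk}(\theta)$ summand. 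Substituting $-\log C_\theta(\bm u) = t_\theta(\bm u)^{1/\theta}$ in the remaining factors of $\log c_\theta$ (the $\log C_\theta(\bm u)$ and $d\log t_\theta(\bm u)$ terms) then recombines everything into the stated form.

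The Joe case proceeds along identical lines. From $\var{a}{J}{dk}(\theta) = S(d,k)\prod_{l=1}^{k-1}(l-1/\theta)$ one reads off $\partial_\theta\log \var{a}{J}{dk}(\theta) = \theta^{-1}\sum_{j=1}^{k-1}(\theta j - 1)^{-1}$, which is the coefficient piece of $\var{a}{J}{dk}(\theta,\bm u)$. For the argument, a direct calculation from the product defining $\var{h}{J}{\theta}(\bm u)$ gives $\partial_\theta\log\var{h}{J}{\theta}(\bm u) = \var{b}{J}{\theta}(\bm u)$, whence $\partial_\theta\log(\var{h}{J}{\theta}(\bm u)/(1-\var{h}{J}{\theta}(\bm u))) = \var{b}{J}{\theta}(\bm u)/(1-\var{h}{J}{\theta}(\bm u))$; this supplies the $(k-1)\var{b}{J}{\theta}(\bm u)/(1-\var{h}{J}{\theta}(\bm u))$ factor. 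Combining these with the derivatives of $(d-1)\log\theta$, $(\theta-1)\sum_j\log(1-u_j)$, and $-(1-1/\theta)\log(1-\var{h}{J}{\theta}(\bm u))$ completes the formula. In both families the difficulty is algebraic bookkeeping rather than anything conceptual: the auxiliary polynomials $\var{Q}{G}{d,\theta,\bm u}$ and $\var{Q}{J}{d,\theta,\bm u}$ are defined precisely so that the coefficient- and argument-derivative contributions fit into a single quotient.
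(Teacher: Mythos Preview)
Your proposal is correct and follows precisely the route the paper indicates: the paper's own proof is the single sentence ``The proof is quite tedious but straightforward to obtain from Corollary \ref{c.formulas},'' and what you have written is a careful expansion of exactly that computation. Your identification of the key identities (the polylogarithm derivative $\tfrac{d}{dz}\Li_s(z)=\Li_{s-1}(z)/z$, the logarithmic $\theta$-derivatives $\partial_\theta\log\var{h}{A}{\theta}=1/\theta+\var{b}{A}{\theta}$, $\partial_\theta\log t_\theta=\var{b}{G}{\theta}$ for Gumbel, $\partial_\theta\log\var{h}{J}{\theta}=\var{b}{J}{\theta}$ for Joe, and the coefficient derivatives of $\var{a}{G}{dk}(\theta)$ and $\var{a}{J}{dk}(\theta)$) is exactly what is needed to carry out the bookkeeping the paper omits.
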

		\begin{proof}
			The proof is quite tedious but straightforward to obtain from Corollary \ref{c.formulas}.
		\end{proof}
\subsection{Likelihood-based confidence intervals}
   Confidence regions or confidence intervals can also be constructed solely based on the likelihood function (without requiring its derivatives). For this, the \textit{likelihood ratio statistic} is used, defined as
	\begin{align*}
		W(\th;\bm{u}_1,\dots,\bm{u}_n)=2(l(\hat{\th}_n;\bm{u}_1,\dots,\bm{u}_n)-l(\th;\bm{u}_1,\dots,\bm{u}_n)),
	\end{align*}
	As \textcite[p.\ 126]{davison2003} notes, the likelihood ratio statistic asymptotically follows a chi-square distribution, meaning that
	\begin{align*}
		W(\th_0;\bm{U}_1,\dots,\bm{U}_n)\omu{d}{\longrightarrow}{}\chi^2_p\ (n\to\infty).
	\end{align*}
	Based on this result, an asymptotic $1-\alpha$ \textit{confidence region} for $\th_0$ is given by
	\begin{align}
		\bigl\{\th\in\Theta\,:\,l(\th;\bm{u}_1,\dots,\bm{u}_n)\ge l(\hat{\th}_n;\bm{u}_1,\dots,\bm{u}_n)-q_{\chi^2_p}(1-\alpha)/2\bigr\}.\label{W}
	\end{align}	
	\par
	If only a sub-vector $\th_0^\text{i}\in\Theta_{p^\text{i}}\subseteq\IR^{p^\text{i}}$ of components of $\th_0=({\th_0^\text{i}}\T\,,{\th_0^\text{n}}\T)\T$ are of interest ($\th_0^\text{i}$ and $\th_0^\text{n}$ are referred to as \textit{parameters of interest} and \textit{nuisance parameters}, respectively), an asymptotic confidence region for $\th_0^i$ follows from a similar argument to before, based on the \textit{profile log-likelihood}
	\begin{align*}
		l_{p_i}(\th^\text{i};\bm{u}_1,\dots,\bm{u}_n)=\sup_{\th^\text{n}}l\biggl(\vec{\th^\text{i}}{\th^\text{n}};\bm{u}_1,\dots,\bm{u}_n\biggr)=l\biggl(\vec{\th^\text{i}}{\hat{\th}_n^{\text{n},\th^\text{i}}};\bm{u}_1,\dots,\bm{u}_n\biggr),
	\end{align*}
	where $\hat{\th}_n^{\text{n},\th^\text{i}}$ is the maximum-likelihood estimator of $\th_0^\text{n}$ given $\th^\text{i}$. Under regularity conditions, the \textit{generalized likelihood ratio statistic} 
	\begin{align*}
		W_{p^\text{i}}(\th^\text{i};\bm{u}_1,\dots,\bm{u}_n)=2\biggl(l(\hat{\th}_n;\bm{u}_1,\dots,\bm{u}_n)-l\biggl(\vec{\th^\text{i}}{\hat{\th}_n^{\text{n},\th^\text{i}}};\bm{u}_1,\dots,\bm{u}_n\biggr)\biggr)
	\end{align*}
	satisfies	
	\begin{align*}
		W_{p^\text{i}}(\th_0^\text{i};\bm{U}_1,\dots,\bm{U}_n)\omu{d}{\longrightarrow}{}\chi^2_{p^\text{i}}\ (n\to\infty).
	\end{align*}	
	An asymptotic $1-\alpha$ \textit{confidence region} for $\th_0^\text{i}$ is thus given by
	\begin{align*}
		\bigl\{\th^\text{i}\in\Theta_{p^\text{i}}\,:\,l_{p^\text{i}}(\th^\text{i};\bm{u}_1,\dots,\bm{u}_n)\ge l_{p^\text{i}}(\hat{\th}_n^\text{i};\bm{u}_1,\dots,\bm{u}_n)-q_{\chi^2_{p^\text{i}}}(1-\alpha)/2\bigr\},
	\end{align*}	
	where
	\begin{align*}
		\hat{\th}_n^\text{i}=\argsup_{\th^\text{i}\in\Theta^\text{i}}l_{p^\text{i}}(\th^\text{i};\bm{u}_1,\dots,\bm{u}_n).
	\end{align*}
	This will be used in Section \ref{sec.multi.param} to construct confidence intervals for multi-parameter families. 
	\par
	\begin{example}\label{ex}
		The left-hand side of Figure \ref{fig.C} shows the log-likelihood of a Clayton copula based on a 100-dimensional sample of size $n=100$ with parameter $\theta_0=2$ such that the corresponding bivariate population version of Kendall's tau equals $\tau(\theta_0)=0.5$. The maximum-likelihood estimator is denoted by $\hat{\theta}_n$ and the lower and upper endpoints of the likelihood-based 0.95 confidence interval by $\theta_l^{0.95}$ and $\theta_u^{0.95}$, respectively. The right-hand side of Figure \ref{fig.C} shows the profile likelihood plot for the same sample. Similarly for Figure \ref{fig.G} which shows the log-likelihood and profile likelihood plot for the 100-dimensional Gumbel family with parameter $\theta_0=2$ such that Kendall's tau equals $\tau(\theta_0)=0.5$. 
		\begin{figure}[htbp]
		 	\centering
		   \includegraphics[width=0.477\textwidth]{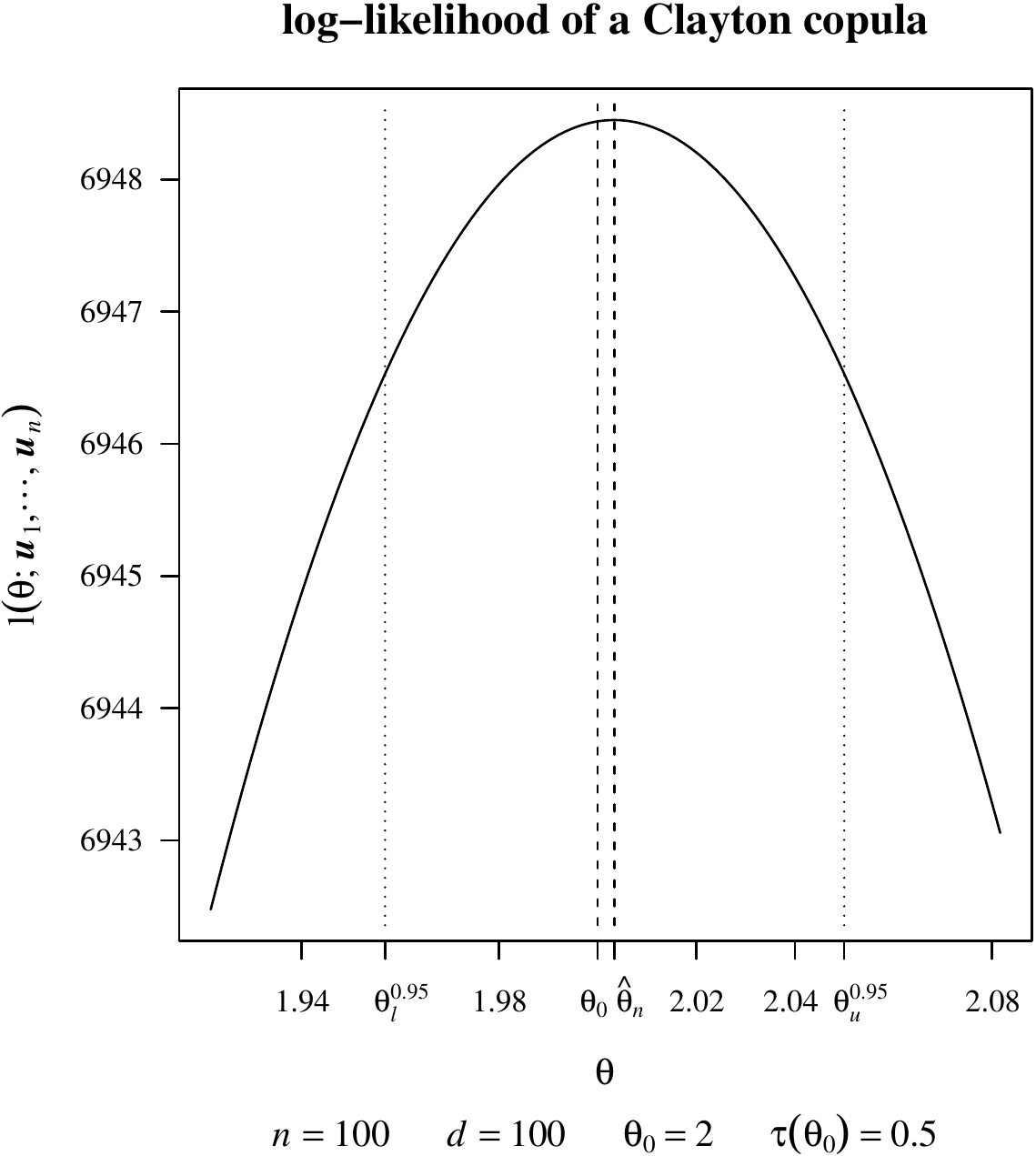}%
		   \hfill
		 	\includegraphics[width=0.50\textwidth]{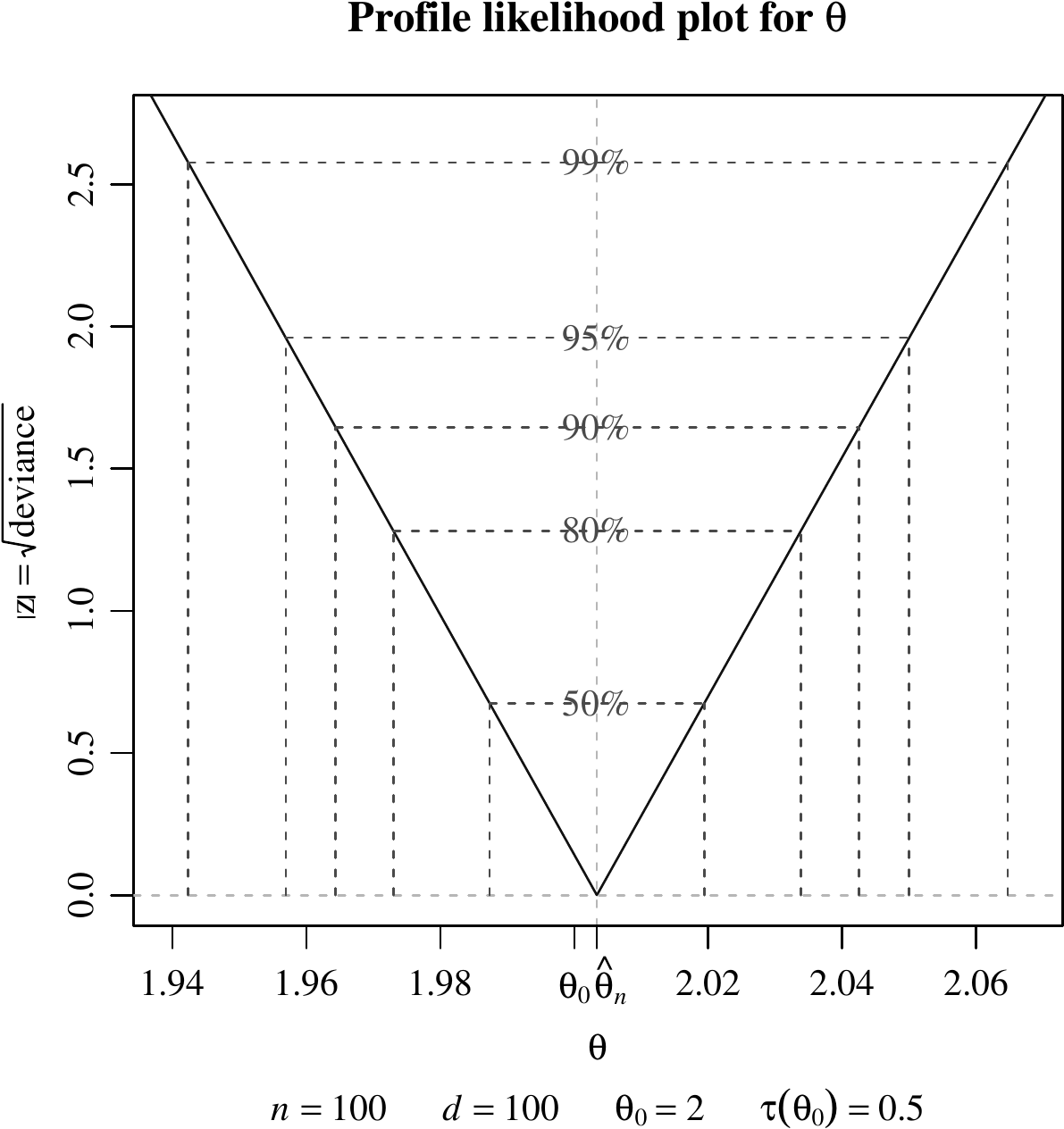}%
		 	\setcapwidth{\textwidth}%
		 	\caption{Plot of the log-likelihood of a Clayton copula (left) based on a sample of size $n=100$ in dimension $d=100$ with parameter $\theta_0=2$ such that Kendall's tau equals $0.5$. Corresponding profile likelihood plot (right).} 
		 	\label{fig.C}
		\end{figure}
		\begin{figure}[htbp]
		 	\centering
		   \includegraphics[width=0.477\textwidth]{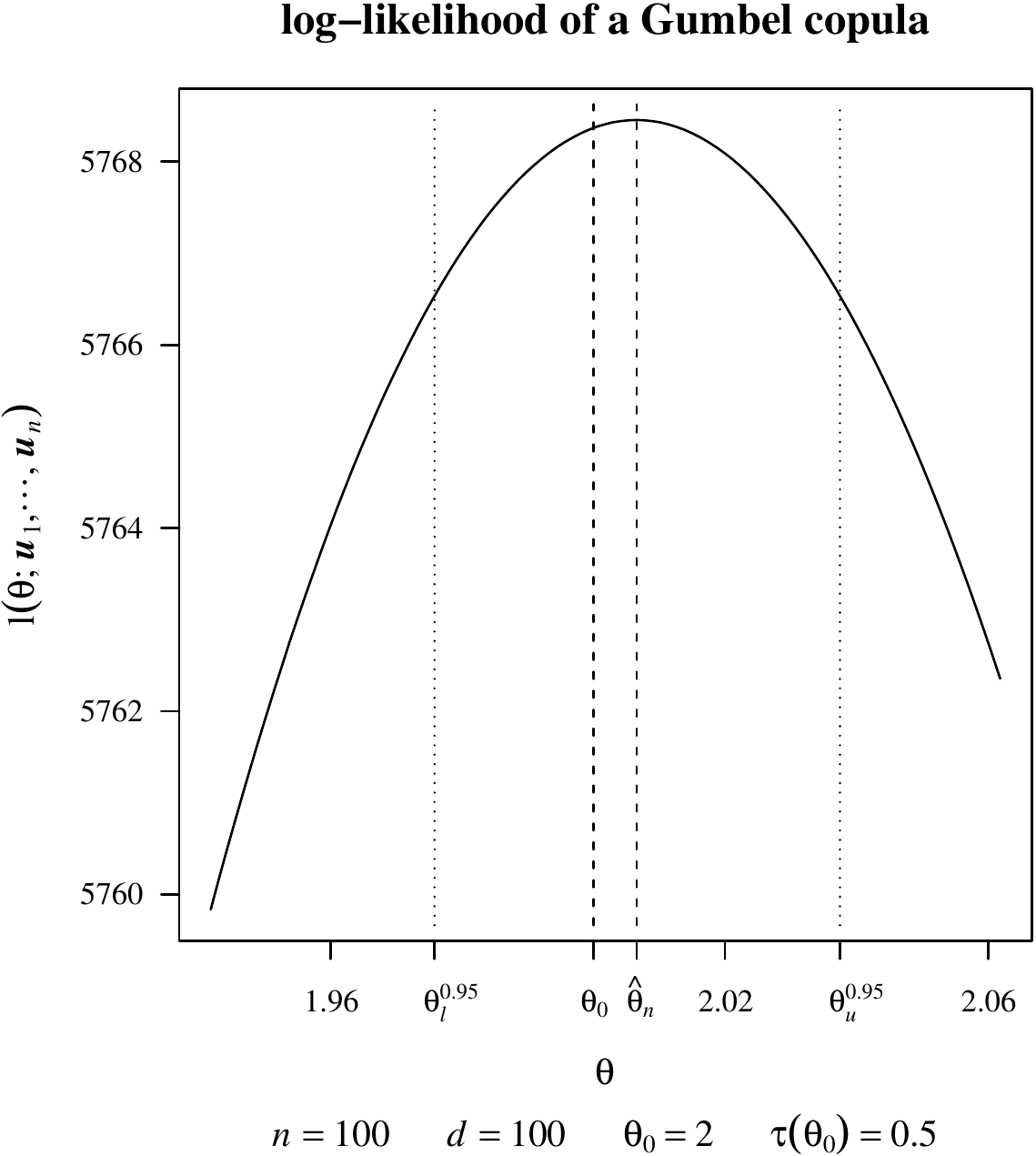}%
		   \hfill
		 	\includegraphics[width=0.50\textwidth]{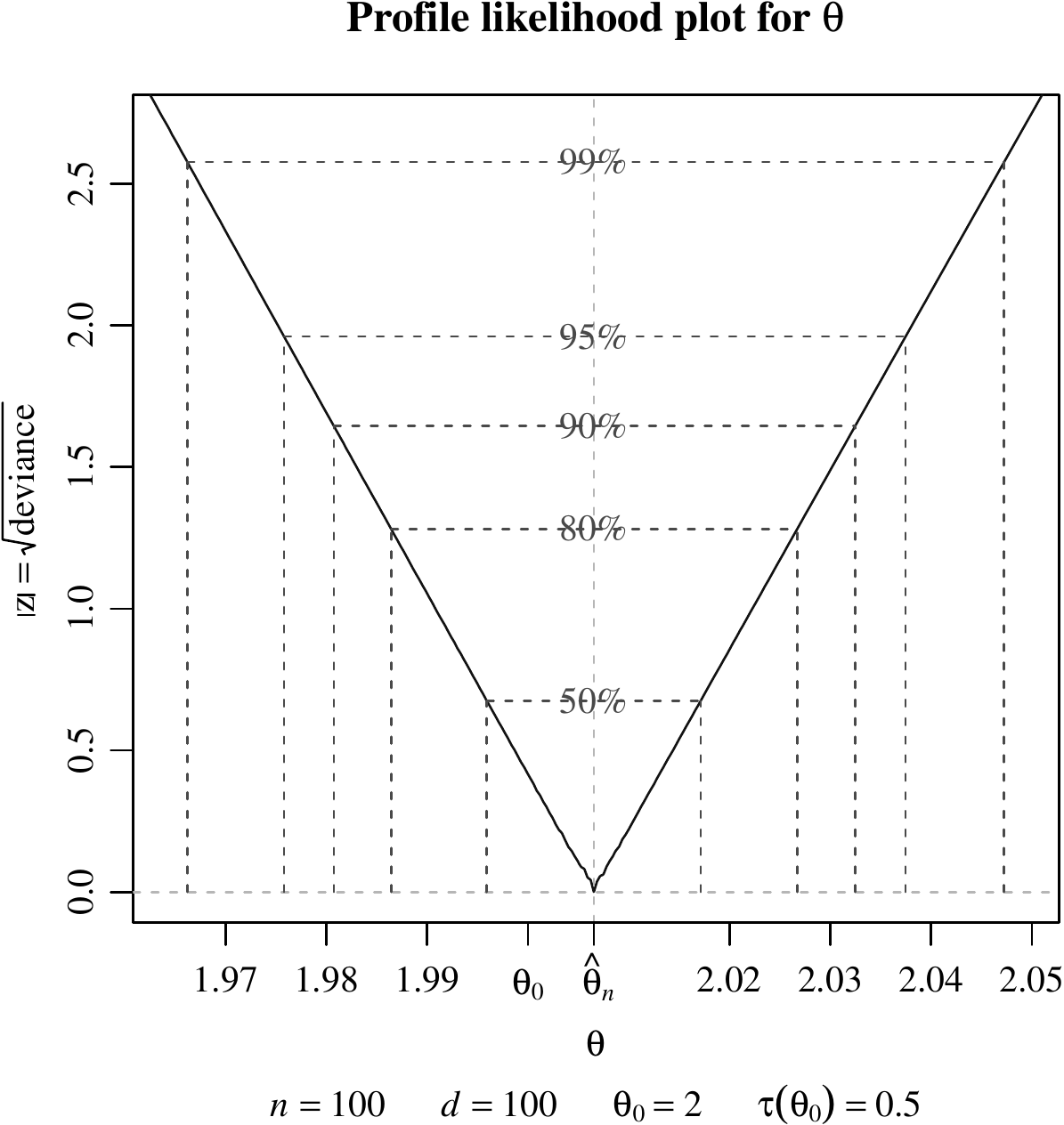}%
		 	\setcapwidth{\textwidth}%
		 	\caption{Plot of the log-likelihood of a Gumbel copula (left) based on a sample of size $n=100$ in dimension $d=100$ with parameter $\theta_0=2$ such that Kendall's tau equals $0.5$. Corresponding profile likelihood plot (right).} 
		 	\label{fig.G}
		\end{figure}
   \end{example}
\subsection{A simulation study to access the coverage probability}
	In this section, we compare the different approaches for obtaining (asymptotic) confidence regions and intervals. For this, we conduct a simulation study to access the coverage probability. The methods for obtaining confidence intervals based on the Fisher information are denoted by ``$I(\hat{\theta}_n)$'' for (\ref{I.theta.hat}), ``$\hat{I}^{(1)}(\hat{\theta}_n)$'' for (\ref{I.1.hat.theta.hat}), and ``$\hat{I}^{(2)}(\hat{\theta}_n)$'' for (\ref{I.2.hat.theta.hat}); the likelihood-based approach (\ref{W}) by ``$W$''.
	\par
	As can be seen from Proposition \ref{score}, already the score functions can be quite complicated. In order to be able to investigate the method $\hat{I}^{(2)}(\hat{\theta}_n)$ based on the observed information, we only consider the Clayton family, for which
	\begin{align*}
		\nabla\nabla\T l(\th;\bm{u})&=-\sum_{k=0}^{d-1}\biggl(\frac{k}{\theta k+1}\biggr)^2+\frac{2}{\theta^2}\biggl(\frac{t^\prime_\theta(\bm{u})}{1+t_\theta(\bm{u})}-\frac{1}{\theta}\log(1+t_\theta(\bm{u}))\biggr)\\
		 &\phantom{={}}+(d+1/\theta)\Biggl(\biggl(\frac{t^\prime_\theta(\bm{u})}{1+t_\theta(\bm{u})}\biggr)^2-\frac{\sum_{j=1}^d(\log u_j)^2u_j^{-\theta}}{1+t_\theta(\bm{u})}\Biggr),
	\end{align*}
	with $t^\prime_\theta(\bm{u})=\frac{d}{d\theta}t_\theta(\bm{u})=\sum_{j=1}^d(-\log u_j)u_j^{-\theta}$, that is, for which $\hat{I}^{(2)}(\hat{\theta}_n)$ can be easily computed. Our simulation study is based on the sample sizes $n\in\{100,400\}$ in the dimensions $d\in\{5,20\}$ for the dependencies $\tau\in\{0.25,0.5,0.75\}$. For each of these setups and each of the methods $I(\hat{\theta}_n)$, $\hat{I}^{(1)}(\hat{\theta}_n)$, $\hat{I}^{(2)}(\hat{\theta}_n)$, and $W$, we determine the proportion of cases among $N=1000$ replications for which the true parameter is contained in the computed confidence interval. Since the expected information is not known explicitly, we evaluate it by a Monte Carlo simulation based on samples of size $10\,000$.
	\par
	Table~\ref{tab.cov.prob} shows the results of the conducted simulation study. Overall, all methods work comparably well. Note that from a computational point of view, $\hat{I}^{(1)}(\hat{\theta}_n)$ is preferred to $I(\hat{\theta}_n)$ if the latter has to be evaluated based on a Monte Carlo simulation. Furthermore, $\hat{I}^{(2)}(\hat{\theta}_n)$ is typically difficult to evaluate, due to the complicated second order derivatives; the tractable Clayton family is certainly an exception. Even $\hat{I}^{(1)}(\hat{\theta}_n)$ may be (numerically) challenging for some families, as Proposition \ref{score} indicates. The likelihood based approach $W$ has several advantages. First, it is typically even simpler to evaluate than $\hat{I}^{(1)}(\hat{\theta}_n)$. Second, it may lead to asymmetric confidence intervals. Finally, by using a re-parameterization, it allows one to construct confidence intervals for quantities such as Kendall's tau or the tail-dependence coefficients (otherwise often obtained from the Delta Method based on the approximate normal distribution).
	\begin{table}[htbp] 
    \centering\footnotesize
    \begin{tabularx}{\textwidth}{@{\extracolsep{\fill}}d{1}{3}d{3}{0}d{1}{2}d{2}{0}d{2}{1}d{2}{1}d{2}{1}d{2}{1}}
      \toprule
      \multicolumn{4}{l}{Coverage probabilities for Clayton (in \%)}&\multicolumn{4}{c}{Method for obtaining confidence intervals}\\
		\cmidrule(lr{0.4em}){5-8}	
		\multicolumn{1}{c}{$1-\alpha$}&\multicolumn{1}{c}{$n$}&\multicolumn{1}{c}{$\tau$}&\multicolumn{1}{c}{$d$}&\multicolumn{1}{c}{$I(\hat{\theta}_n)$}&\multicolumn{1}{c}{$\hat{I}^{(1)}(\hat{\theta}_n)$}&\multicolumn{1}{c}{$\hat{I}^{(2)}(\hat{\theta}_n)$}&\multicolumn{1}{c}{$W$}\\ 
      \midrule
	   0.95  & 100 & 0.25 & 5  & 95.6 & 95.4 & 96.0 & 95.6 \\ 
	        &     &      & 20 & 94.6 & 95.4 & 94.9 & 94.9 \\ 
	   \addlinespace[2pt]      &     & 0.5  & 5  & 94.1 & 94.2 & 94.0 & 94.0 \\ 
	        &     &      & 20 & 95.9 & 96.7 & 95.8 & 95.8 \\ 
	   \addlinespace[2pt]      &     & 0.75 & 5  & 95.7 & 95.9 & 95.6 & 95.7 \\ 
	        &     &      & 20 & 95.8 & 95.9 & 95.9 & 95.9 \\ 
	   \addlinespace[4pt]      & 400 & 0.25 & 5  & 94.8 & 94.9 & 95.1 & 95.1 \\ 
	        &     &      & 20 & 95.7 & 96.3 & 96.0 & 96.0 \\ 
	   \addlinespace[2pt]      &     & 0.5  & 5  & 95.1 & 95.2 & 95.2 & 95.0 \\ 
	        &     &      & 20 & 95.6 & 95.3 & 95.3 & 95.2 \\ 
	   \addlinespace[2pt]      &     & 0.75 & 5  & 94.8 & 94.5 & 95.0 & 94.9 \\ 
	        &     &      & 20 & 94.7 & 95.0 & 94.7 & 94.7 \\ 
	   \addlinespace[6pt]0.99  & 100 & 0.25 & 5  & 98.8 & 98.7 & 99.1 & 99.1 \\ 
	        &     &      & 20 & 98.9 & 98.4 & 98.9 & 98.9 \\ 
	   \addlinespace[2pt]      &     & 0.5  & 5  & 98.2 & 98.7 & 98.4 & 98.4 \\ 
	        &     &      & 20 & 99.4 & 99.2 & 99.3 & 99.3 \\ 
	   \addlinespace[2pt]      &     & 0.75 & 5  & 99.1 & 99.1 & 99.2 & 99.2 \\ 
	        &     &      & 20 & 98.8 & 98.7 & 98.8 & 98.8 \\ 
	   \addlinespace[4pt]      & 400 & 0.25 & 5  & 98.7 & 98.7 & 98.8 & 98.8 \\ 
	        &     &      & 20 & 99.3 & 99.1 & 99.3 & 99.3 \\ 
	   \addlinespace[2pt]      &     & 0.5  & 5  & 99.0 & 98.9 & 99.0 & 99.0 \\ 
	        &     &      & 20 & 99.6 & 99.7 & 99.6 & 99.6 \\ 
	   \addlinespace[2pt]      &     & 0.75 & 5  & 98.7 & 98.6 & 98.7 & 98.7 \\ 
	        &     &      & 20 & 99.1 & 99.3 & 99.1 & 99.1 \\ 
	   \addlinespace[6pt]0.995 & 100 & 0.25 & 5  & 99.5 & 99.5 & 99.7 & 99.5 \\ 
	        &     &      & 20 & 99.4 & 99.4 & 99.5 & 99.5 \\ 
	   \addlinespace[2pt]      &     & 0.5  & 5  & 99.2 & 99.2 & 99.4 & 99.3 \\ 
	        &     &      & 20 & 99.8 & 99.8 & 99.9 & 99.9 \\ 
	   \addlinespace[2pt]      &     & 0.75 & 5  & 99.5 & 99.5 & 99.5 & 99.5 \\ 
	        &     &      & 20 & 99.6 & 99.1 & 99.6 & 99.5 \\ 
	   \addlinespace[4pt]      & 400 & 0.25 & 5  & 99.0 & 99.1 & 99.3 & 99.3 \\ 
	        &     &      & 20 & 99.6 & 99.4 & 99.6 & 99.6 \\ 
	   \addlinespace[2pt]      &     & 0.5  & 5  & 99.1 & 99.3 & 99.2 & 99.2 \\ 
	        &     &      & 20 & 99.9 & 99.8 & 99.9 & 99.9 \\ 
	   \addlinespace[2pt]      &     & 0.75 & 5  & 99.3 & 99.3 & 99.3 & 99.2 \\ 
	        &     &      & 20 & 99.7 & 99.6 & 99.8 & 99.7 \\ 
      \bottomrule
    \end{tabularx}
    \setcapwidth{\textwidth}%
    \caption{Simulated coverage probabilities for Clayton's family based on $N=1000$ replications.}
    \label{tab.cov.prob}
  \end{table}
\section{Multi-parameter families}\label{sec.multi.param}
	The one-parameter generators of Ali-Mikhail-Haq, Clayton, Frank, Gumbel, and Joe can easily be extended to allow for more parameters, for example, by so-called outer power transformations or even more general generator transformations; see \textcite{hofert2010b}, \textcite{hofert2010c}, or \textcite{hofert2011a}. In this section, we investigate an outer power Clayton copula and the Archimedean GIG family and apply maximum-likelihood estimation for estimating the copula parameters. Both of these families are available via the \textsf{R} package \texttt{nacopula} so that the interested reader can easily follow our calculations. The computations carried out in this section were run on a Mac mini under Mac OS X Version 10.6.6 with a 2.66\,GHz Intel Core 2 Duo processor and 4\,GB 1067\,MHz DDR3 memory. The \textsf{R} version used is 2.12.1.
\subsection{Finding initial intervals}\label{sec.ii}
	Maximizing the log-likelihood $l$ is typically achieved by a numerical routine. These algorithms often require an initial interval (or an initial value, which can be derived from the former). This interval should be sufficiently large in order to contain the optimum, but also sufficiently small in order to find the optimum fast. Furthermore, one should be able to compute an initial interval in a small amount of time in comparison to the actual log-likelihood evaluations required for maximizing the log-likelihood. 
	\par		
	For Archimedean families with $\psis{\th}{}\in\Psi_\infty$, the measure of concordance Kendall's tau is a function in $\th$ which always maps to the unit interval; see, for example, \textcite[pp.\ 59]{hofert2010c}. It thus provides an intuitive ``distance'' in terms of concordance. For one-parameter families, one can thus typically choose an initial interval of the form
	\begin{align*}
		[\tau^{-1}(\max\{\hat{\tau}-h,\tau_l\}),\tau^{-1}(\min\{\hat{\tau}+h,\tau_u\})],
	\end{align*}
	where $h\in[0,1]$ is suitably chosen with intuitive interpretation as ``distance in concordance'' and $\tau_l$ and $\tau_u$ denote lower and upper admissible Kendall's tau for the families considered (in Example~\ref{ex} we used this technique to find an interval on which the log-likelihood is plotted; we took $\hat{\tau}$ as the correct value $\tau=0.5$, and used $h=0.01$ and $h=0.015$ for Clayton's and Gumbel's family, respectively). If the dimension is not too large, one can take the mean of pairwise sample versions of Kendall's tau as estimator $\hat{\tau}$ of Kendall's tau; see \textcite{berg2009}, \textcite{kojadinovicyan2010}, and \textcite{savutrede2010} for this estimator. Another option is a multivariate version of Kendall's tau; see \textcite[pp.\ 217]{jaworskidurantehaerdlerychlik2010}. A fast way, especially in large dimensions, is to utilize the explicit diagonal maximum-likelihood estimator 
	\begin{align*}
		\hat{\theta}_n^{\text{G}}=\frac{\log d}{\log n-\log\bigl(\sum_{i=1}^n-\log Y_i\bigr)},\ \text{where}\ Y_i=\max_{j\in\{1,\dots,d\}}U_{ij},\ i\in\{1,\dots,n\}.
	\end{align*}
	for Gumbel's family, see \textcite{hofertmaechlermcneil2011a}, and estimate Kendall's tau by $\tau^{\text{G}}(\hat{\theta}_n^{\text{G}})$, where $\tau^{\text{G}}(\theta)=(\theta-1)/\theta$ denotes Kendall's tau for Gumbel's family as a function in the parameter. Since the optimization for one-parameter families is typically not too time-consuming, one can also just maximize the log-likelihood on a reasonably large, fixed interval, for example $[\tau^{-1}(h_1),\tau^{-1}(h_2)]$, where $h_1$ and $h_2$ are suitably chosen constants in the range of $\tau$; see \textcite{hofertmaechlermcneil2011a}.
	\par
	For multi-parameter Archimedean families, the log-likelihood is typically even more challenging to evaluate. An initial interval therefore also serves the purpose of reducing the parameter space to an area where the log-likelihood can be evaluate without numerical problems. The idea we present here to construct initial intervals for multi-parameter families is again based on Kendall's tau. In a first step, we estimate Kendall's tau by $\hat{\tau}_n$. To this end we apply the pairwise Kendall's tau estimator, which, due to the rather complicated log-likelihood evaluations does not take too much run time for the ten-dimensional examples considered below; another option would be to randomly select sub-columns of the data and apply the pairwise Kendall's tau estimator to this sub-data in order to reduce run time. Based on this estimator of Kendall's tau, we then construct an initial rectangle by three points. These points are determined via $\tau^{-1}(\hat{\tau}_n-h_-)$ and $\tau^{-1}(\hat{\tau}_n+h_+)$, that is, via certain positive numbers $h_-$ and $h_+$ (sufficiently small to ensure that $\hat{\tau}_n-h_-$ and $\hat{\tau}_n+h_+$ are in the range of admissible Kendall's tau). They allow for an intuitive interpretation as ``distance in (terms of) concordance'' and are independent of the parameterization of the family (since they measure distances in Kendall's tau and not in the underlying copula parameters). Now note that $\tau^{-1}$ is not uniquely defined for two- or more-parameter families. It is, however, if one fixes all but one parameter. By starting with one corner of the initial rectangle to be constructed and applying monotonicity properties of $\tau$ as a function in its parameters, one can thus construct an initial rectangle around the estimate $\hat{\tau}_n$ of $\tau(\th_0)$. More details are given in Sections \ref{sec.opC} and \ref{sec.GIG} for the two-parameter Archimedean families investigated. 	
\subsection{Outer power copulas}\label{sec.opC}
	If $\psi\in\Psi_\infty$, so is $\tilde{\psi}(t)=\psi(t^{1/\beta})$ for all $\beta\in[1,\infty)$, since the composition of a completely monotone function with a non-negative function that has a completely monotone derivative is again completely monotone; see \textcite[p.\ 441]{feller1971}. The copula family generated by $\tilde{\psi}$ is referred to as \textit{outer power family}.  
	\par   
	The generator derivatives of $\tilde{\psi}(t)=\psi(t^{1/\beta})$ can be accessed with a formula about derivatives of compositions which dates back at least to \textcite{schloemilch1846}. According to this formula, 
	\begin{align*}
		(-1)^d\tilde{\psi}^{(d)}(t)=\var{P}{op}{}(t^{1/\beta})/t^d,\ d\in\IN,\quad\text{where}\quad\var{P}{op}{}(x)=\sum_{k=1}^d\var{a}{G}{dk}(\beta)(-1)^k\psi^{(k)}(x)x^k.
	\end{align*}
	Via (\ref{c}) and the form of $\var{a}{G}{dk}$ given in Theorem \ref{gen.der} \ref{gen.der.G} one can thus easily derive the density of an outer power copula.
	\par
	For sampling $\tilde{V}\sim\tilde{F}=\LSi[\tilde{\psi}]$, \textcite{hofert2011a} derived the stochastic representation
	\begin{align*}
		\tilde{V}=SV^\beta,\ S\sim\S(1/\beta,1,\cos^\beta(\pi/(2\beta)),\I_{\{\beta=1\}};1),\ V\sim F=\LSi[\psi].
	\end{align*}
	Note that $\tilde{V}$ can easily be sampled via the \textsf{R} package \texttt{nacopula} for all $\psi$ given in Table~\ref{tab.gen}.
	\par
	We consider the case where $\psi$ is Clayton's generator, so we obtain the two-parameter \textit{outer power Clayton copula} with generator $\tilde{\psi}(t)=(1+t^{1/\beta})^{-1/\theta}$. This copula, which generalizes the Clayton family, was successfully applied in \textcite{hofertscherer2011} in the context of pricing collateralized debt obligations. For this copula, Kendall's tau and the tail-dependence coefficients are given explicitly by
	\begin{align}
		\tau=\tau(\theta,\beta)=1-\frac{2}{\beta(\theta+2)},\quad\lambda_L=2^{-1/(\beta\theta)},\quad\lambda_U=2-2^{1/\beta}.\label{opC.prop}
	\end{align}
	Note the possibility to have upper tail dependence for this copula, which is not possible for a Clayton copula.
	\par
	The following algorithm describes a procedure for finding an initial interval for outer power Clayton copulas. The algorithm can easily be adapted to other outer power copulas, given that the base family (the family generated by $\psi$) is positively ordered in its parameter and admits a sufficiently large range of Kendall's tau.  
	\begin{algorithm}\label{alg.ii.increasing}
		\myskipalgo
		\linespread{1.22}\normalfont
		\begin{tabbing}
			\hspace{8mm} \= \hspace{4mm} \= \hspace{80mm} \= \kill
			(1) \> Choose $h_-,h_+\ge0$, and $\eps>0$.\\
			(2) \> Let the smallest $\beta$ be denoted by $\beta_l=1$.\\
			(3) \> Solve $\tau(\theta_u,\beta_l)=\min\{\hat{\tau}_n+h_+,1-\eps\}$ with respect to $\theta_u$.\\
			(4) \> Solve $\tau(\theta_l,\beta_l)=\max\{\hat{\tau}_n-h_-,\eps\}$ with respect to $\theta_l$.\\
			(5) \> Solve $\tau(\theta_l,\beta_u)=\min\{\hat{\tau}_n+h_+,1-\eps\}$ with respect to $\beta_u$.\\
			(6) \> Return the initial interval $I=[(\theta_l,\beta_l)\T,(\theta_u,\beta_u)\T]$.\\
		\end{tabbing}
	\end{algorithm}
	The idea behind Algorithm \ref{alg.ii.increasing} is to construct an initial rectangle by three points. First, the lower-right endpoint of the rectangle is constructed. Since $\tau(\theta,\beta)$ is an increasing function in both $\theta$ and $\beta$, the largest $\theta$ and the smallest $\beta$, that is, $(\theta_u,\beta_l)\T$, are chosen such that Kendall's tau equals $\hat{\tau}_n$ plus a small ``distance in concordance'' $h_+\ge0$ to ensure that $\theta_u$ is indeed an upper bound for $\theta$. The truncation done by $\eps>0$ is to obtain an admissible Kendall's tau range. Second, the lower-left endpoint is found. The monotonicity of $\tau$ justifies determining the minimal value $\theta_l$ for $\theta$ such that $\tau(\theta_l,\beta_l)=\max\{\hat{\tau}_n-h_-,\eps\}$, where $h_-\ge0$ is suitably chosen, similar to $h_+$. In the third and final step, the upper-left endpoint of the initial rectangle is determined. The maximal value $\beta_u$ for $\beta$ is determined in a similar fashion to the first step. Note that all equations can be solved explicitly due to the explicit form of Kendall's tau as given in (\ref{opC.prop}). 
	\par
	To access the performance of the maximum-likelihood estimator, we generate $N=1000$ times $n=100$ realizations of i.i.d.\ random vectors following $d$-dimensional outer power Clayton copulas. For demonstration purposes, we consider $d=10$. Furthermore, we consider three setups of dependencies: $\th=(\theta,\beta)\T=(1/3,8/7)\T$ resulting in a Kendall's tau of 0.25; $\th=(1,4/3)\T$ with corresponding Kendall's tau equal to 0.5; and $\th=(2,2)\T$ with Kendall's tau equal to 0.75. For finding initial intervals, Algorithm \ref{alg.ii.increasing} is applied with $\eps=0.005$, $h_-=0.4$, and $h_+=0$. The results are summarized in Table~\ref{tab.opC}, where ``RMSE'' denotes the root mean squared error as before and ``MUT'' denotes the mean user time (in seconds).  
	\begin{table}[htbp] 
   	\centering
		\begin{tabularx}{\textwidth}{@{\extracolsep{\fill}}cd{1}{2}d{1}{4}d{1}{4}d{1}{4}d{1}{4}d{2}{4}d{1}{4}d{2}{0}d{1}{2}}
      \toprule	
      &&&\multicolumn{2}{c}{$\hat{\theta}_n$}&&\multicolumn{2}{c}{$\hat{\beta}_n$}&\\
		\cmidrule(lr{0.4em}){4-5}\cmidrule(lr{0.4em}){7-8}
		\multicolumn{1}{c}{$n$}&\multicolumn{1}{c}{$\tau$}&\multicolumn{1}{c}{$\theta$}&\multicolumn{1}{c}{Bias}&\multicolumn{1}{c}{RMSE}&\multicolumn{1}{c}{$\beta$}&\multicolumn{1}{c}{Bias}&\multicolumn{1}{c}{RMSE}&\multicolumn{1}{c}{$\#$}&\multicolumn{1}{c}{MUT}\\ 
      \midrule
      100 & 0.25 & 0.3333 & 0.0073 & 0.0609 & 1.1429 & -0.0017 & 0.0429 & 42 & 0.7\,\text{s} \\ 
		100 & 0.5 & 1.0000 & 0.0082 & 0.1050 & 1.3333 & -0.0003 & 0.0613 & 39 & 0.6\,\text{s} \\
		100 & 0.75 & 2.0000 & 0.0107 & 0.1786 & 2.0000 & -0.0012 & 0.1088 & 47 & 0.7\,\text{s} \\ \addlinespace[4pt]
		500 & 0.25 & 0.3333 & 0.0025 & 0.0276 & 1.1429 & -0.0014 & 0.0189 & 42 & 1.2\,\text{s} \\
		500 & 0.5 & 1.0000 & 0.0026 & 0.0451 & 1.3333 & -0.0013 & 0.0268 & 38 & 1.1\,\text{s} \\
		500 & 0.75 & 2.0000 & 0.0031 & 0.0753 & 2.0000 & -0.0013 & 0.0483 & 49 & 1.3\,\text{s} \\
      \bottomrule
    \end{tabularx}
    \setcapwidth{\textwidth}%
    \caption{Summary statistics for estimating two-parameter outer power Clayton copulas.}
    \label{tab.opC}
  \end{table}
  \par
  Figure~\ref{fig.opC.wf.lp} shows a wire-frame plot (left) of the negative log-likelihood of a sample of size $n=100$ for the setup $\th=(1,4/3)\T$ ($\tau=0.5$) and the corresponding level plot (right). Both plots have the initial interval determined by Algorithm \ref{alg.ii.increasing} as domain and show both the true value $\th_0=(\theta_0,\beta_0)\T$ and the optimum $\hat{\th}_n=(\hat{\theta}_n,\hat{\beta}_n)\T$ as determined by the optimizer. 
  	\begin{figure}[htbp]
	 	\centering
	   \includegraphics[width=0.45\textwidth]{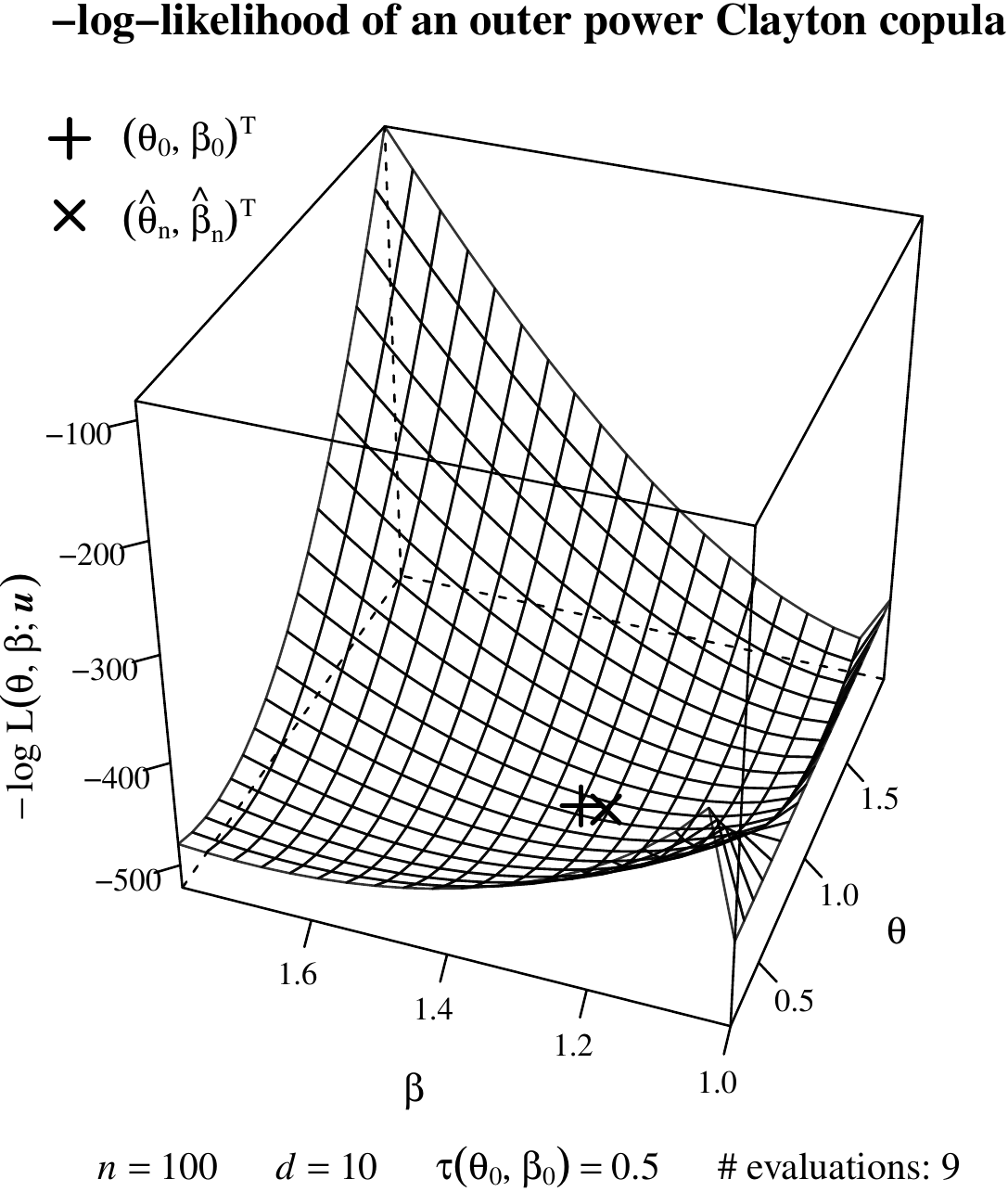}%
	   \hfill
	 	\includegraphics[width=0.537\textwidth]{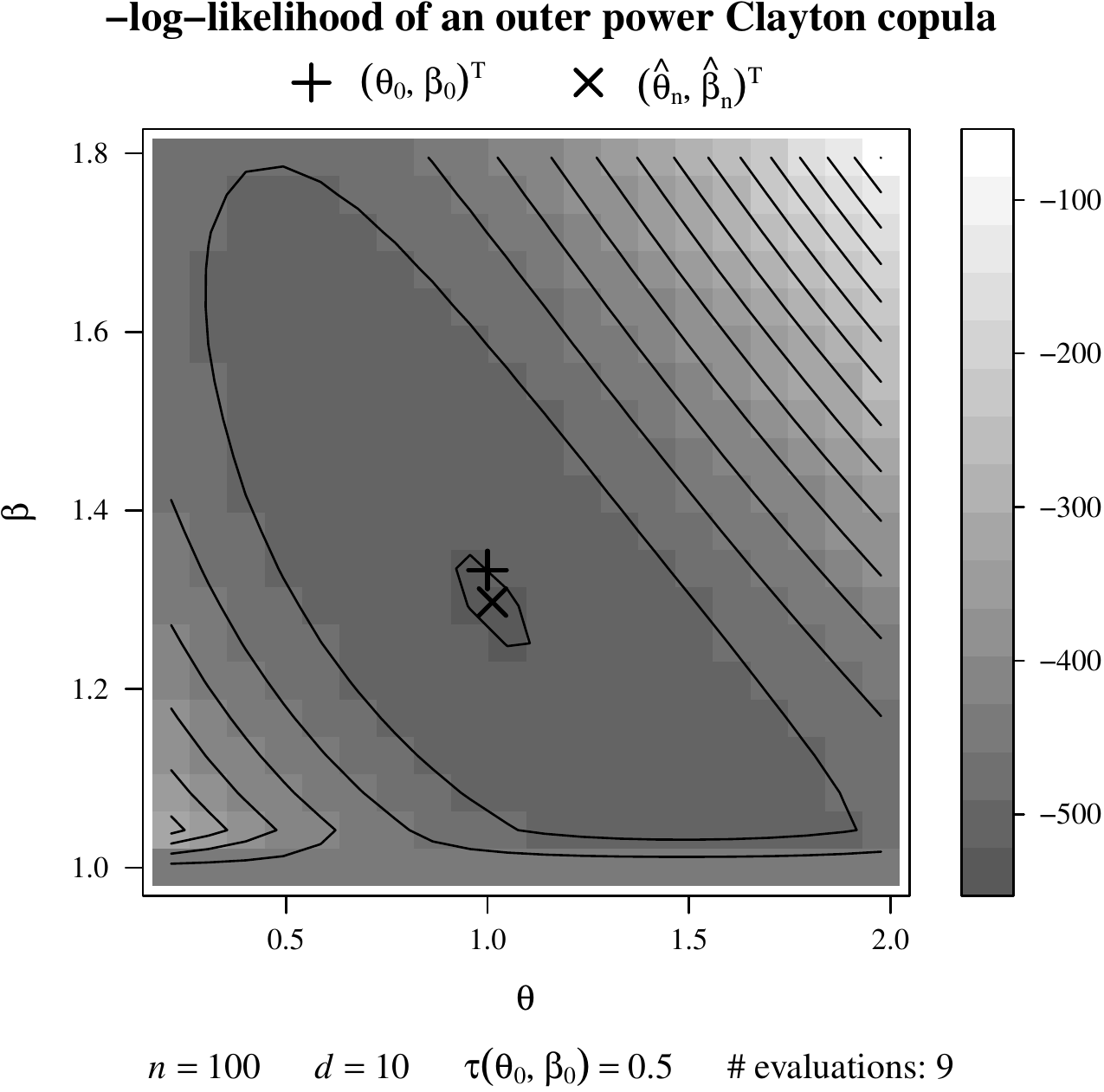}%
	 	\setcapwidth{\textwidth}%
	 	\caption{A wire-frame plot (left) and corresponding level plot (right) of the negative log-likelihood function for an outer power Clayton copula for a sample of size $n=100$ for $\th=(1,4/3)\T$ ($\tau=0.5$) with the computed initial interval as domain.} 
	 	\label{fig.opC.wf.lp}
	\end{figure}
	Figure \ref{fig.opC.profile} shows profile likelihood plots for the two parameters $\theta$ and $\beta$. 
	\begin{figure}[htbp]
	 	\centering
	   \includegraphics[width=0.48\textwidth]{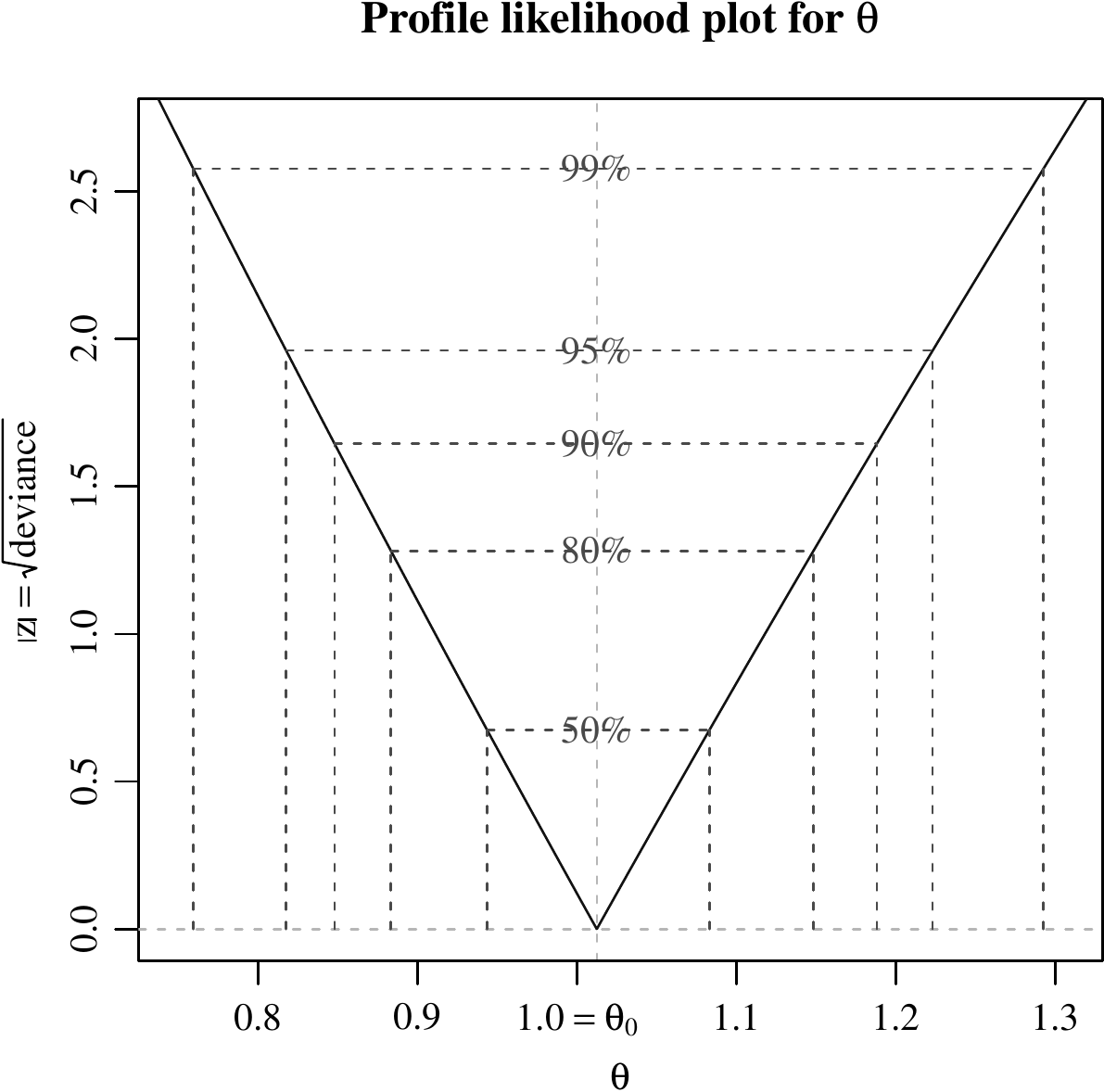}%
	   \hfill
	 	\includegraphics[width=0.48\textwidth]{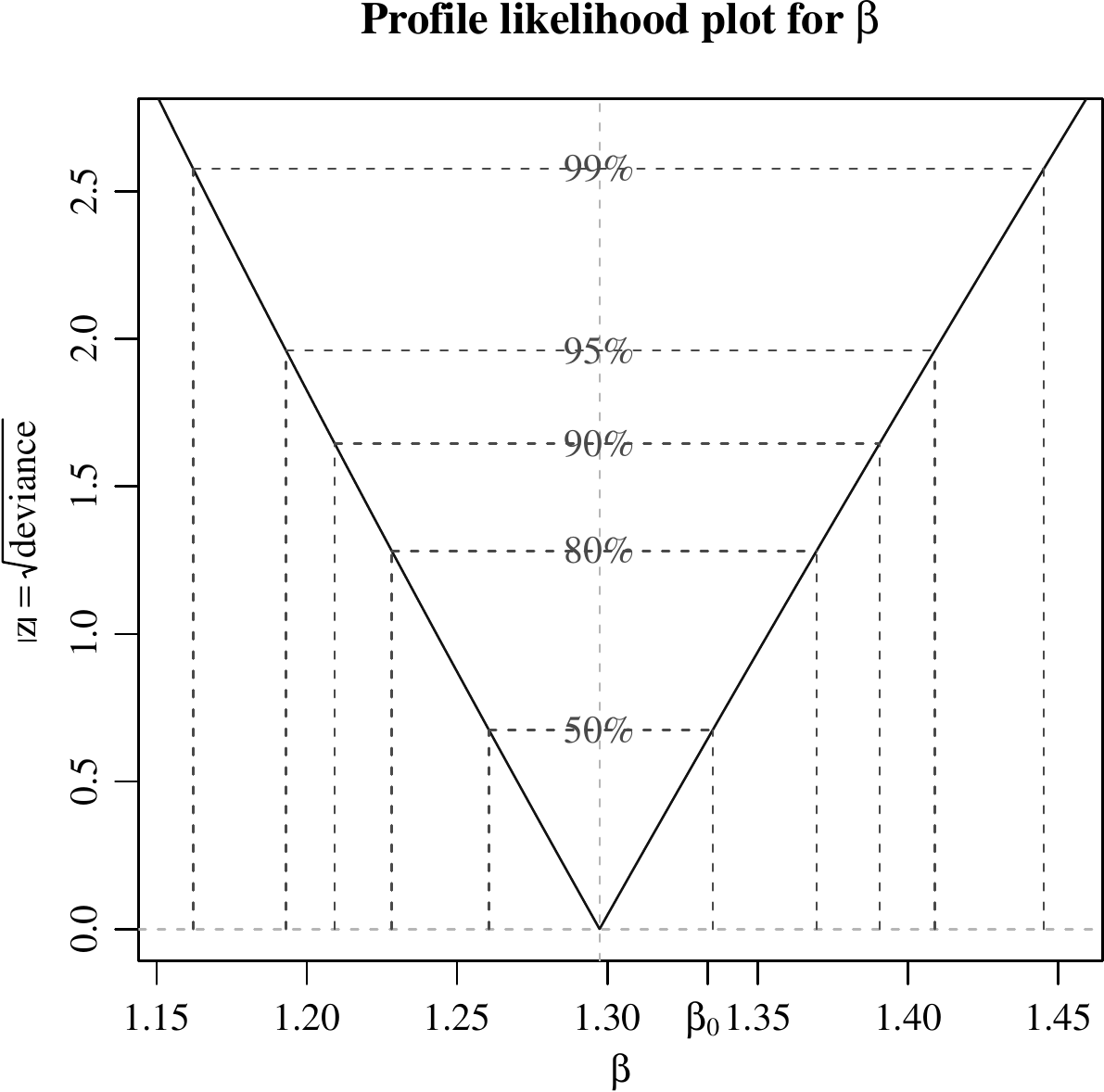}%
	 	\setcapwidth{\textwidth}%
	 	\caption{Profile likelihood plot for $\theta$ (left) and $\beta$ (right).} 
	 	\label{fig.opC.profile}
	\end{figure}
\subsection{The GIG family}\label{sec.GIG}
	An Archimedean family which naturally allows for two parameters can be constructed as follows. We start with the density of a \textit{generalized inverse Gaussian distribution} $\GIG(\nu,\phi,\chi)$, given by
	\begin{align*}
		g(x;\nu,\phi,\chi)=\frac{(\phi/\chi)^{\nu/2}}{2K_\nu(\sqrt{\phi\chi})}x^{\nu-1}\exp(-(\chi/x+\phi x)/2),\ x\in(0,\infty).
	\end{align*}
	Here, $\nu\in\IR$ with: $\phi\in[0,\infty)$, $\chi\in(0,\infty)$, if $\nu\in(-\infty,0)$; $\phi,\chi\in(0,\infty)$, if $\nu=0$; and $\phi\in(0,\infty)$, $\chi\in[0,\infty)$, if $\nu\in(0,\infty)$; see \textcite[p.\ 497]{mcneilfreyembrechts2005}. The function $K_\nu(t)=\int_0^\infty\cosh(\nu x)\exp(-t\cosh(x))\,dx$ denotes the \textit{modified Bessel function of the third kind} with parameter $\nu$. It is decreasing in $t$ and symmetric about zero in $\nu$. Furthermore, it is increasing in $\nu$ if $\nu\in(0,\infty)$. Another important property is
	\begin{align}
	   \lim_{t\searrow0}t^\nu K_\nu(t)=2^{\nu-1}\Gamma(\nu)\ \text{if}\ \nu\in(0,\infty).\label{h.lim}
	\end{align}
	\par
	Note that for a $\psi\in\Psi_\infty$, the generator $\psi(ct)$ generates the same Archimedean copula as $\psi(t)$ for all $c\in(0,\infty)$. Letting $c=\phi/2$ and $\theta=\sqrt{\phi\chi}$ leads to a comparably simple form of the generator of an \textit{Archimedean GIG copula} with parameter vector $\th=(\nu,\theta)\T$, given by
	\begin{align}
		\psi(t)=(1+t)^{-\nu/2} K_\nu(\theta\sqrt{1+t})/K_\nu(\theta),\ t\in[0,\infty),\ \nu\in\IR,\ \theta\in(0,\infty).\label{psi.GIG}
	\end{align}
	If we let 
	\begin{align*}
		h_{\nu_1,\nu_2,\theta}(t)=\frac{(\theta\sqrt{1+t})^{\nu_1}K_{\nu_1}(\theta\sqrt{1+t})}{\theta^{\nu_2}K_{\nu_2}(\theta)},\ \nu_1,\nu_2\in\IR,\ \theta\in(0,\infty),\ t\in[0,\infty),
	\end{align*}
	one obtains from (\ref{h.lim}) that $\lim_{\theta\searrow0}h_{\nu_1,\nu_2,\theta}(t)=2^{\nu_1-\nu_2}\Gamma(\nu_1)/\Gamma(\nu_2)$ for every $\nu_1,\nu_2\in(0,\infty)$ and $t\in[0,\infty)$. Since $\psi$ can be written as $\psi(t)=(1+t)^{-\nu}h_{\nu,\nu,\theta}(t)$, one obtains as limiting case a $\Gamma(\nu,1)$ distribution for $\theta\searrow0$ if $\nu\in(0,\infty)$, that is, a Clayton copula with parameter $1/\nu$.
	\par
	The density $f$ of $F=\LSi[\psi]$ is given by
	\begin{align*}
		f(x;\nu,\theta)=\frac{(2x)^{\nu-1}}{\theta^\nu K_\nu(\theta)}\exp(-(\theta^2/(2x)+2x)/2),\ x\in(0,\infty),
	\end{align*}
	so that $V=X/2\sim F$ with $X\sim\GIG(\nu,1,\theta^2)$. The GIG distribution can easily be sampled with the \textsf{R} package \texttt{Runuran}. For numerically computing $\psii$, note that $K_\nu(y)/K_\nu(x)<\exp(x-y)(y/x)^\nu$ for all $\nu\in(-1/2,\infty)$ and $0<x<y<\infty$ (see \textcite{paris1984}), so that $[0,(1-\log(t)/\theta)^2-1]$ is an initial interval for searching $\psii(t)$ for all $\nu\in(-1/2,\infty)$. 
	\par
	Kendall's tau and the coefficients of tail dependence of a GIG copula are given by 
	\begin{align*}
		\tau=\tau(\nu,\theta)=1-\int_0^\infty t(\theta h_{\nu+1,\theta}(t)/K_\nu(\theta))^2\,dt,\quad\lambda_L=\lambda_U=0,
	\end{align*}
	where $h_{\nu+k,\theta}(t)=K_{\nu+k}(\theta\sqrt{1+t})/\sqrt{1+t}^{\nu+k}$, $t\in[0,\infty)$, $\theta\in(0,\infty)$, and $\nu,k\in\IR$. For computing the tail dependence coefficients, consider $\psi^\prime(2t)/\psi^\prime(t)$ for the limits $t\downarrow0$ and $t\uparrow\infty$, and use $K_\nu(t)\approx\sqrt{\pi/(2t)}\exp(-t)$ (valid for $t\gg\lvert\nu^2-1/4\rvert$) in the latter case. A numerically stable evaluation of the integral formula for Kendall's tau for small $\nu>0$ based on the Clayton limit is given in the \textsf{R} package \texttt{nacopula}. Note that as numerical results indicate, Kendall's tau is decreasing in both $\nu$ and $\theta$ if $\nu\in[0,\infty)$; see Figure~\ref{fig.GIG} (left). If $\nu\in(0,\infty)$, the limit for Kendall's tau as $\theta\searrow0$ is $1/(1+2\nu)$ which equals Kendall's tau for Clayton's family with parameter $1/\nu$.	Figure~\ref{fig.GIG} (right) shows a scatter plot of 1000 bivariate vectors of random variates following a GIG family with $\th=(0.05,0.0968)\T$ with corresponding Kendall's tau equal to 0.5.
	\begin{figure}[htbp]
	 	\centering
	    \includegraphics[width=0.48\textwidth]{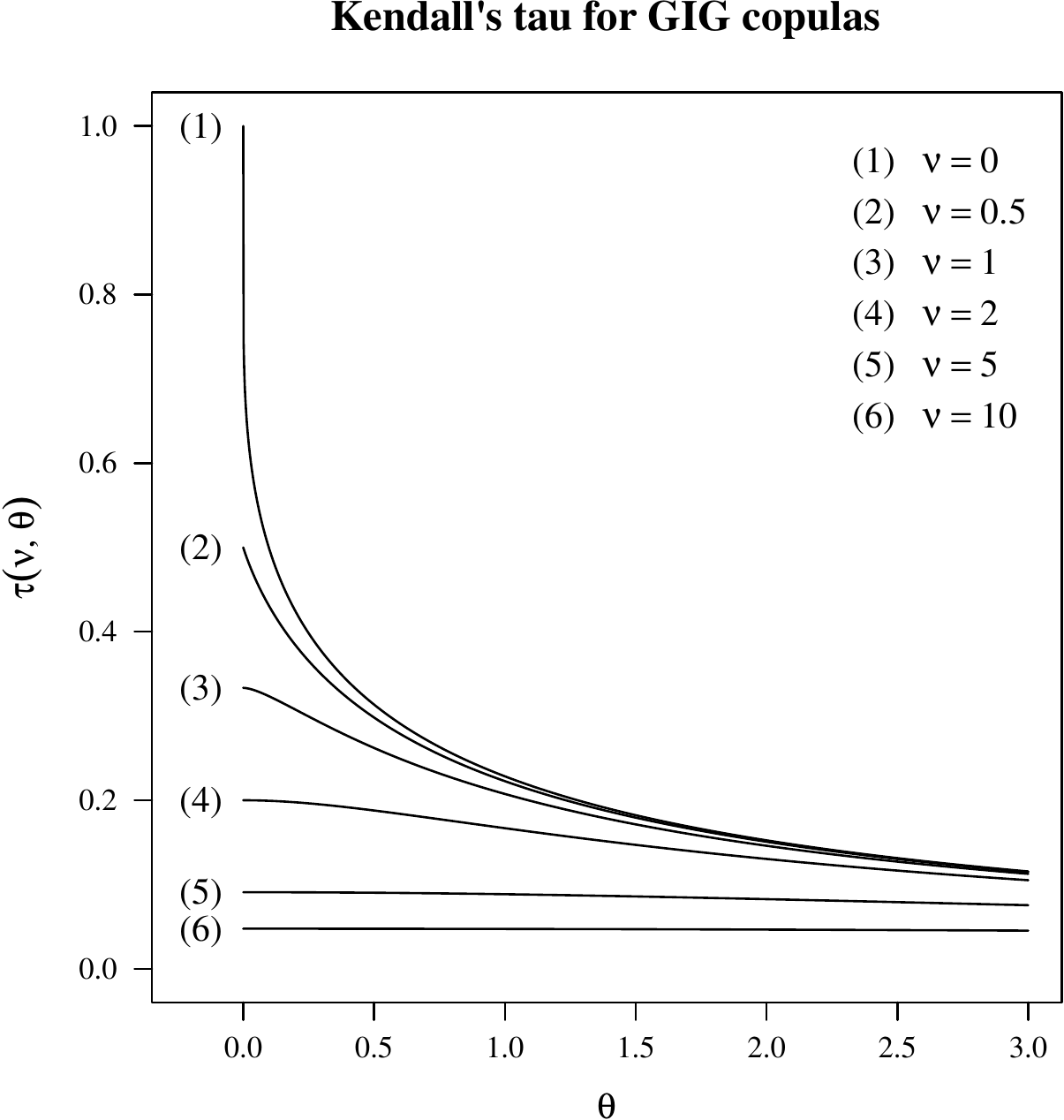}%
	    \hfill
	 	 \includegraphics[width=0.48\textwidth]{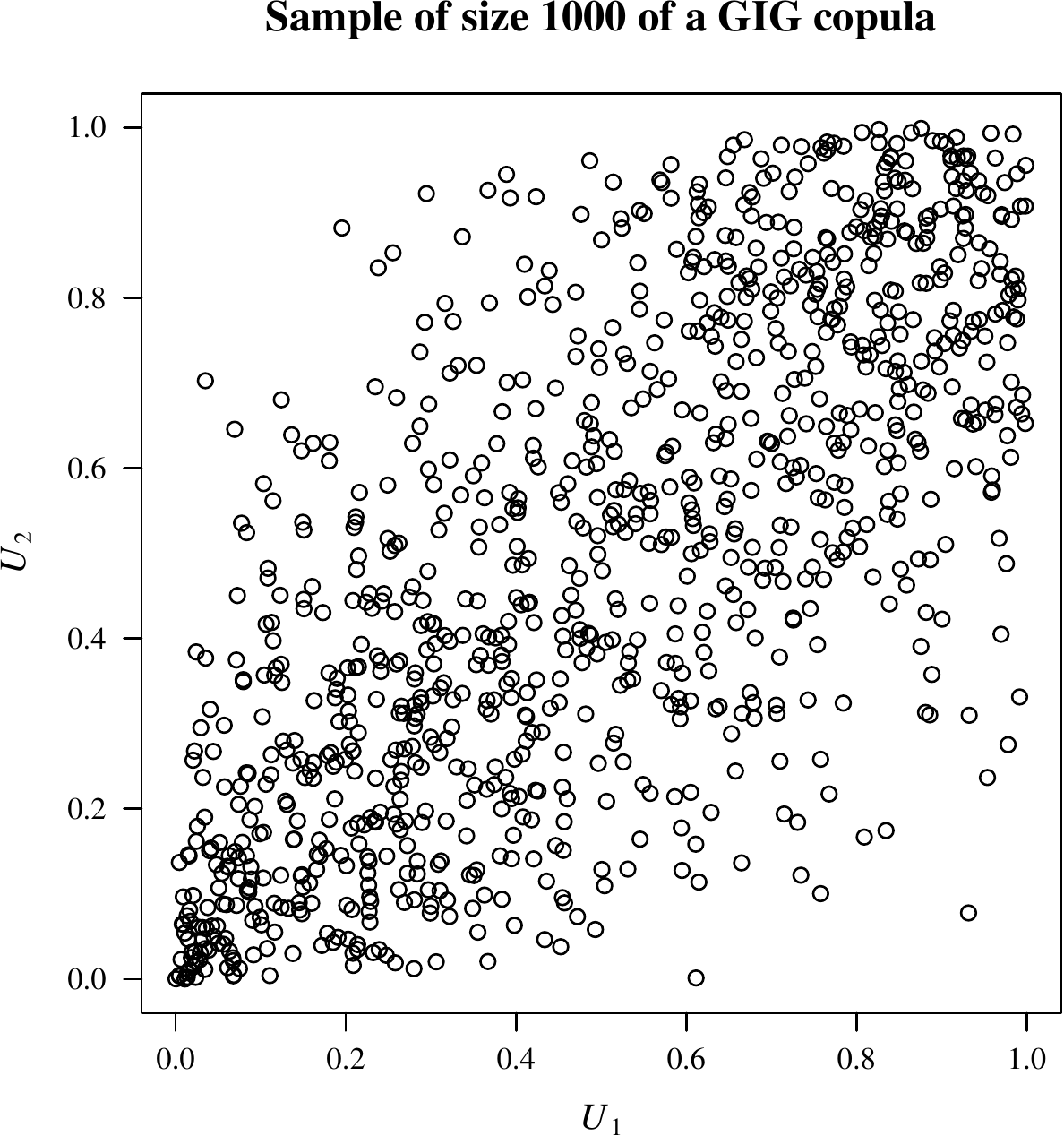}%
	 	\setcapwidth{\textwidth}%
	 	\caption{Kendall's tau for the GIG family as a function in $\theta$ for given $\nu$ (left). A sample of 1000 bivariate vectors of random variates following a GIG family with $\th=(0.05,0.0968)\T$ and corresponding Kendall's tau equal to 0.5 (right).} 
	 	\label{fig.GIG}
	\end{figure}
	\par
	One advantage of the GIG family is that the generator derivatives take on a comparably simple form, which can be represented as
	\begin{align*}
		(-1)^d\psi^{(d)}(t)=\frac{h_{\nu+d,\nu,\theta}(t)}{2^d(1+t)^{\nu+d}},\ t\in(0,\infty),\ d\in\IN_0.
	\end{align*}
	This can be easily derived by differentiating $\psi$ under the integral sign and interpreting the resulting integrand as the density of a $\GIG(\nu+d,2(1+t),\theta^2/2)$ distribution which integrates to one. Via (\ref{c}), one then easily finds the form of the log-density of a GIG family, given by
	\begin{align*}
		\log c(\bm{u})&=\log h_{\nu+d,\theta}(t_{\th}(\bm{u}))+(d-1)\log K_\nu(\theta)-\sum_{j=1}^d\log h_{\nu+1,\theta}(\psii(u_j)),\\
		              &=-(\nu+d)\log(1+t_{\th}(\bm{u}))+\log h_{\nu+d,\nu,\theta}(t_{\th}(\bm{u}))+(\nu+1)\sum_{j=1}^d\log(1+\psii(u_j))\\
		&\phantom{={}}-\sum_{j=1}^d\log h_{\nu+1,\nu,\theta}(\psii(u_j)).
	\end{align*}
	\par
	The following algorithm describes a procedure for finding an initial interval for GIG copulas with $\nu\in[0,\infty)$. The idea behind this algorithm is similarly to those of Algorithm \ref{alg.ii.increasing}. However, it takes into account that $\tau(\nu,\theta)$ is decreasing in both parameters $\nu\in[0,\infty)$ and $\theta\in(0,\infty)$ and thus first determines the upper-left, then the lower-left, and finally the lower-right endpoint of the initial rectangle. As before, $\hat{\tau}_n$ denotes an estimator of Kendall's tau, taken as the pairwise Kendall's tau estimator. Note that for $\nu=\nu_l=0$, the range of Kendall's tau as a function in $\theta$ is $(0,1]$; see Figure \ref{fig.GIG} (left). Furthermore, for $\theta=\theta_l$ sufficiently small, the range of Kendall's tau as a function in $\nu$ is $(0,1)$.
	\begin{algorithm}\label{alg.ii.decreasing}
		\myskipalgo
		\linespread{1.22}\normalfont
		\begin{tabbing}
			\hspace{8mm} \= \hspace{4mm} \= \hspace{80mm} \= \kill
			(1) \> Choose $h_-,h_+\ge0$, and $\eps>0$.\\
			(2) \> Let the smallest $\nu$ be denoted by $\nu_l=0$.\\
			(3) \> Solve $\tau(\nu_l,\theta_u)=\max\{\hat{\tau}_n-h_-,\eps\}$ with respect to $\theta_u$.\\
			(4) \> Solve $\tau(\nu_l,\theta_l)=\min\{\hat{\tau}_n+h_+,1-\eps\}$ with respect to $\theta_l$.\\
			(5) \> Solve $\tau(\nu_u,\theta_l)=\max\{\hat{\tau}_n-h_-,\eps\}$ with respect to $\nu_u$.\\
			(6) \> Return the initial interval $I=[(\nu_l,\theta_l)\T,(\nu_u,\theta_u)\T]$.\\
		\end{tabbing}
	\end{algorithm}
	\par
	We generate $N=200$ times $n=100$ realizations of i.i.d.\ random vectors in $d=10$ dimensions following GIG copulas with parameters $\th=(\nu,\theta)\T=(0.1,0.8333)\T$ resulting in a Kendall's tau of 0.25, $\th=(0.05,0.0968)\T$ with corresponding Kendall's tau equal to 0.5, and $\th=(0.01,0.0012)\T$ with Kendall's tau equal to 0.75 (the choice of $N$ is made solely to the larger run time for this family). For finding initial intervals, Algorithm \ref{alg.ii.decreasing} is applied with $\eps=0.005$ and $h_-=h_+=0.15$. The results are summarized in Table~\ref{tab.GIG}. Note that especially under weak concordance, the GIG family requires a larger sample size $n$ in order for the bias to be small. 
	\begin{table}[htbp] 
    \centering
	 \begin{tabularx}{\textwidth}{@{\extracolsep{\fill}}cd{1}{2}d{1}{2}d{1}{4}d{1}{4}d{1}{4}d{2}{4}d{1}{4}d{3}{0}d{3}{2}}
      \toprule	
      &&&\multicolumn{2}{c}{$\hat{\nu}_n$}&&\multicolumn{2}{c}{$\hat{\theta}_n$}&\\
		\cmidrule(lr{0.4em}){4-5}\cmidrule(lr{0.4em}){7-8}
		\multicolumn{1}{c}{$n$}&\multicolumn{1}{c}{$\tau$}&\multicolumn{1}{c}{$\nu$}&\multicolumn{1}{c}{Bias}&\multicolumn{1}{c}{RMSE}&\multicolumn{1}{c}{$\theta$}&\multicolumn{1}{c}{Bias}&\multicolumn{1}{c}{RMSE}&\multicolumn{1}{c}{$\#$}&\multicolumn{1}{c}{MUT}\\
      \midrule
      100 & 0.25 & 0.1000 & 0.1622 & 0.3769 & 0.8333 & -0.0178 & 0.1395 & 57 & 12.3\,\text{s} \\ 
      100 & 0.5 & 0.0500 & 0.0065 & 0.0538 & 0.0968 & 0.0002 & 0.0134 & 55 & 12.7\,\text{s} \\ 
		100 & 0.75 & 0.0100 & 0.0198 & 0.0541 & 0.0012 & -0.0001 & 0.0005 & 93 & 39.3\,\text{s} \\ \addlinespace[4pt]
		500 & 0.25 & 0.1000 & 0.0480 & 0.1680 & 0.8333 & -0.0003 & 0.0562 & 63 & 67.0\,\text{s} \\ 
      500 & 0.5 & 0.0500 & 0.0034 & 0.0295 & 0.0968 & 0.0003 & 0.0063 & 53 & 60.4\,\text{s} \\
      500 & 0.75 & 0.0100 & 0.0057 & 0.0286 & 0.0012 & -0.0000 & 0.0003 & 102 & 168.3\,\text{s} \\  
      \bottomrule
    \end{tabularx}
    \setcapwidth{\textwidth}%
    \caption{Summary statistics for estimating two-parameter GIG copulas.}
    \label{tab.GIG}
  \end{table}
\section{Conclusion}\label{sec.con}
	We presented explicit functional forms for the generator derivatives of well-known Archimedean copulas. These explicit formulas are of interest for several reasons. Apart from being able to express various important quantities such as conditional distributions or the Kendall distribution function explicitly, the generator derivatives allow us to apply maximum-likelihood estimation for estimating the parameter vectors of various Archimedean copulas, even in large dimensions such as $d=100$. The excellent performance in terms of both precision and run time of maximum likelihood estimation was shown in \textcite{hofertmaechlermcneil2011a} in a large-scale comparison with various other estimators up to dimension $d=100$. In the present work, we presented the theoretical details and showed that maximum-likelihood estimation is also feasible for multi-parameter Archimedean families. Furthermore, we showed that the mean squared error MSE is decreasing in the dimension $d$ and that this decrease is of the same order as the decrease in the sample size $n$, that is, $\text{MSE}\propto1/(nd)$. We also constructed initial intervals for the likelihood optimization. Moreover, we obtained likelihood-based confidence intervals for the parameter vector and compared them to information-based confidence intervals for the Clayton family where the Fisher information is comparably easy to compute. A transparent implementation of the presented results is given in the open source \textsf{R} package \texttt{nacopula}, so that the interested reader can easily follow our calculations.  
\subsection*{Acknowledgements}
	The authors would like to thank Khristo Boyadzhiev (Ohio Northern University) for introducing us to and guiding us through the fascinating world of exponential polynomials.
\bibliographystyle{plainnat}
\bibliography{./mybib} 
\end{document}